\newcommand{\excise}[1]{}%{$\star$\textsc{#1}$\star$}
\newtheorem{thm}{Theorem}[section]
\newtheorem{lemma}[thm]{Lemma}
\newtheorem{cor}[thm]{Corollary}
\newtheorem{prop}[thm]{Proposition}
\newtheorem{question}[thm]{Question}
\newtheorem{goal}[thm]{Goal}
\theoremstyle{definition}
\newtheorem{example}[thm]{Example}
\newtheorem{remark}[thm]{Remark}
\newtheorem{defn}[thm]{Definition}
\newtheorem{notation}[thm]{Notation}
\numberwithin{equation}{section}
\newcommand\NN{\mathbb{N}}
\newcommand\RR{\mathbb{R}}
\newcommand\R{\mathbb{R}}
\newcommand\ZZ{\mathbb{Z}}
\newcommand\Z{\mathbb{Z}}
\newcommand\kk{\Bbbk}
\newcommand\uu{{\mathbf u}}
\newcommand\vv{{\mathbf v}}
\newcommand\ww{{\mathbf w}}
\newcommand\xx{{\mathbf x}}
\newcommand\yy{{\mathbf y}}
\newcommand\zz{{\mathbf z}}
\newcommand\cC{{\mathcal C}}
\DeclarePairedDelimiter{\norm}{\lVert}{\rVert}
\providecommand{\abs}[1]{\lvert#1\rvert}
\DeclareMathOperator\cone{cone} % cone
\DeclareMathOperator\adj{adj} % adjugate matrix
\DeclareMathOperator{\dist}{dist}
\DeclareMathOperator{\interior}{interior}
\DeclareMathOperator\sop{s}
\newcommand\nsg{\mathcal{S}}
\newcommand\nsgtwo{\mathcal{T}}
\def\vec#1{\mathchoice{\mbox{\boldmath$\displaystyle\bf#1$}}
{\mbox{\boldmath$\textstyle\bf#1$}}
{\mbox{\boldmath$\scriptstyle\bf#1$}}
{\mbox{\boldmath$\scriptscriptstyle\bf#1$}}}
\begin{document}%%%%%%%%%%%%%%%%%%%%%%%%%%%%%%%%%%%%%%%%%%%%%%%%%%%%%%%%
%%%%%%%%%%%%%%%%%%%%%%%%%%%%%%%%%%%%%%%%%%%%%%%%%%%%%%%%%%%%%%%%%%%%%%%%

\mbox{}
%\vspace{-2ex}%-1.1743pt}

\title[Numerical semigroups via projections and via quotients]{Numerical semigroups via projections \\ and via quotients}

\author[Bogart]{Tristram Bogart}
\address{Departamento de Matem\'aticas \\ Universidad de los Andes \\ Bogot\'a, Colombia}
\email{tc.bogart22@uniandes.edu.co}

\author[O'Neill]{Christopher O'Neill}
\address{Mathematics Department\\San Diego State University\\San Diego, CA 92182}
\email{cdoneill@sdsu.edu}

\author[Woods]{Kevin Woods}
\address{Department of Mathematics\\Oberlin College\\Oberlin, OH 44074}
\email{kwoods@oberlin.edu}

\date{\today}

\begin{abstract}
We examine two natural operations to create numerical semigroups. We~say that a numerical semigroup $\nsg$ is $k$-normalescent if it is the projection of the set of integer points in a $k$-dimensional polyhedral cone, and we say that $\nsg$ is a $k$-quotient if it is the quotient of a numerical semigroup with $k$ generators. We prove that all $k$-quotients are $k$-normalescent, and although the converse is false in general, we prove that the projection of the set of integer points in a cone with $k$ extreme rays (possibly lying in a dimension smaller than $k$) is a $k$-quotient. The discrete geometric perspective of studying cones is useful for studying $k$-quotients:\ in particular, we use it to prove that the sum of a $k_1$-quotient and a $k_2$-quotient is a $(k_1+k_2)$-quotient. In addition, we prove several results about when a numerical semigroup is \emph{not} $k$-normalescent.
\end{abstract}

\subjclass[2020]{primary 20M14, 52B20; secondary 06F05, 13F65}

\keywords{numerical semigroup, normal affine semigroup, polyhedral cone}

\maketitle

% \setcounter{tocdepth}{1}
% \tableofcontents

%%%%%%%%%%%%%%%%%%%%%%%%%%%%%%%%%%%%%%%%%%%%%%%%%%%%%%%%%%%%%%%%%%%%%%%%%
\section{Introduction}
\label{sec:intro}
%%%%%%%%%%%%%%%%%%%%%%%%%%%%%%%%%%%%%%%%%%%%%%%%%%%%%%%%%%%%%%%%%%%%%%%%%

We denote $\NN = \{0,1,2,\dots\}$, and we define a \emph{numerical semigroup}\footnote{It is more standard to also require that $\gcd(\nsg)=1$, but we would like to prove our results in greater generality. In Proposition~\ref{prop:proj} and Remark~\ref{rem:equiv}, we will see that the two options are actually equivalent from our perspective.}
 to be a set $\nsg\subseteq\NN$ that is closed under addition and contains~0. A numerical semigroup can be defined by a set of generators,
\[\langle a_1,\ldots,a_n\rangle = \{a_1x_1+\cdots +a_nx_n:\ x_i\in\NN\},\]
and if $a_1,\ldots,a_n$ are the (unique) minimal set of generators of $\nsg$, we say that $\nsg$ has \emph{embedding dimension} $\mathsf e(\nsg) = n$.  For example,
\[\langle 3,5\rangle = \{0,3,5,6,8,9,10,\ldots\}\]
has embedding dimension 2.  

If $\nsg$ is a numerical semigroup, then an interesting way to create a new numerical semigroup 
is by taking the \emph{quotient}
\[
\frac{\nsg}{d} = \{ t \in \NN:\  dt \in \nsg\}
\]
by some positive integer $d$.  Note that $\nsg/d$ is itself a numerical semigroup, one that in particular satisfies $\nsg \subseteq \nsg/d \subseteq \NN$.  For example, 
\[
\frac{\langle 3,5\rangle}{2}=\{0,3,4,5,\ldots\}=\langle 3,4,5\rangle.
\]

The following definition was introduced in~\cite{BOW1}.  

\begin{defn}\label{def:quotientrank}
We say a numerical semigroup $\nsg$ is a \emph{$k$-quotient} if $\nsg=\langle a_1,\ldots,a_k\rangle/d$ for some positive integers $d, a_1, \ldots, a_k$.  The \emph{quotient rank} of $\nsg$ is the smallest $k$ such that $\nsg$ is a $k$-quotient, and we say $\nsg$ has \emph{full quotient rank} if its quotient rank is $\mathsf e(\nsg)$.  Note that $\langle a_1,\ldots, a_k\rangle = \langle a_1,\ldots, a_k\rangle/1$, so the quotient rank is always at most $\mathsf e(\nsg)$.
\end{defn}

Quotients of numerical semigroups appear throughout the literature over the past couple of decades~\cite{symmetriconeelement,symmetricquotient} as well as recently~\cite{harrisquotient,nsquotientgens}.  The well-studied family of proportionally modular numerical semigroups~\cite{fundamentalgaps} are known to be precisely those with quotient rank two~\cite{propmodular}.  See~\cite[Chapter~6]{numerical} for a thorough overview of quotients.  In~\cite{BOW1}, we gave a sufficient condition for a numerical semigroup to have full quotient rank, as well as explicit examples with arbitrarily large quotient rank, and showed that ``almost all'' numerical semigroups have full quotient rank.

Seemingly unrelated to the above, 
% A seemingly unrelated idea \toall{insert some background and references} is to examine 
\emph{normal affine semigroups} are subsets of $\ZZ^k$ of the form $C \cap \ZZ^k$, where $C\subseteq\RR^k$ is a pointed rational polyhedral cone (with vertex at the origin), that is,
\[
C
= \cone(\vec v_1,\ldots, \vec v_\ell)
= \{ \lambda_1 \vec v_1 + \cdots + \lambda_\ell \vec v_\ell:\ \lambda_i \in \RR_{\ge 0} \},
\]
for some $\vec v_1,\ldots, \vec v_\ell \in \ZZ^k$.  (See \cite[Chapters 7 and 8]{Schrijver} for background on cones.)
% We can see from the definition that any normal affine semigroup $C \cap \NN^k$ is closed under addition and contain $\vec 0$.  
Although the only (nonnegative) one-dimensional normal affine semigroup is $\langle 1\rangle=\NN$, we can obtain other numerical semigroups as the image of higher dimensional normal affine semigroups under a projection, since linear maps preserve additive closure.

\begin{defn}\label{def:normrank}
We say a numerical semigroup $\nsg$ is \emph{$k$-normalescent}\footnote{The word \emph{normalescent} is meant to evoke that it is obtained from a normal semigroup via the process of projection; similar variants on the word ``normal'' tend to have some established meaning.}
if $\nsg = \pi(\cC \cap \ZZ^m)$, where $\cC \subseteq \RR^m$ is a $k$-dimensional rational polyhedral cone and $\pi:\RR^m\rightarrow \RR$ is a linear map with integer coefficients.  The \emph{normalescence rank} of $\nsg$ is the smallest~$k$ such that $\nsg$ is $k$-normalescent, and we say $\nsg$ has \emph{full normalescence rank} if its normalescence rank is $\mathsf e(\nsg)$.
Note $\langle a_1, \ldots, a_k \rangle = \pi(\cC \cap \ZZ^k)$, where $\cC = \RR_{\ge 0}^k$ and 
\[
\pi(x_1,\ldots,x_k) = a_1x_1 + \cdots + a_kx_k,
\]
so the normalescence rank of $\nsg$ is at most $\mathsf e(\nsg)$.
\end{defn}

It is convenient to allow $\cC$ and $\pi$ to have negative coordinates, though we must have $\pi(\cC) \subseteq \RR_{\ge 0}$ or else $ \pi(\cC \cap \ZZ^m)$ would contain negative integers.

\begin{example}
\emph{Arithmetical} numerical semigroups, which have the form 
$$\nsg = \langle a, a + h,\ldots, a + nh \rangle,$$
are 2-normalescent. Indeed, choose $C = \cone\big((1,0),(1,n)\big)$ and $\pi(x,y) = ax + hy$. For~$0\le i\le n$, $(1,i)\in C$ has $\pi(1,i)=a+ih$. Since $\{(1,0), \dots, (1,n)\}$ is easily seen to generate the normal affine semigroup $C \cap \ZZ^2$, this yields $\nsg = \pi(C \cap \ZZ^2)$.  These semigroups are known to have quotient rank~two~\cite{propmodular}, identical to their normalescence rank.  
\end{example}

The classification of $k$-normalescent semigroups is an interesting question for several reasons.  
On one hand, in the study of toric varieties~\cite{cls}, $\pi$ can be thought of as inducing a positive grading on the normal semigroup algebra $R = \kk[\cC \cap \ZZ^k]$ over a field~$\kk$, so that $\pi(\cC \cap \ZZ^k)$ equals the set of $\pi$-graded degrees of monomials in $R$.  In this setting, our question becomes:\ ``which numerical semigroups arise as the set of degrees of a normal semigroup algebra?''  
On the other hand, from the viewpoint of semigroup theory, we will note an intriguing, but easily proven, connection:\ all $k$-quotients are $k$-normalescent.  We will see that the converse is false in general (Proposition~\ref{p:counterexample}), but our main result is to prove a partial converse that is already quite powerful.  

\begin{defn}\label{def:simpnorm}
% A full-dimensional cone $C\subseteq \RR_{\ge 0}^k$ is \emph{simplicial} if there exist exactly $k$ linearly independent vectors $\vec v_1,\ldots, \vec v_k\in\RR_{\ge 0}$ such that $C=\cone(\vec v_1,\ldots, \vec v_k)$
The \emph{extreme rays} of a cone $\cC\subseteq \R^m$ are the minimal set of $\vec v_1,\ldots,\vec v_k$ such that $\cC=\cone(\vec v_1,\ldots,\vec v_k)$.
A numerical semigroup $\nsg$ is \emph{$k$-ray-normalescent} if $\nsg = \pi(\cC \cap \ZZ^m)$, where $\cC \subseteq \RR^m$ is a rational polyhedral cone with $k$ extreme rays and $\pi:\RR^m \rightarrow \RR$ is a linear map with integer coefficients. 
\end{defn}

% Note that by definition, $k$-ray-normalescent implies $k$-ray-normalescent. 
  
\begin{thm}[Main Theorem]\label{thm:main}
A numerical semigroup is a $k$-quotient if and only if it is $k$-ray-normalescent.
%A numerical semigroup is a $k$-quotient if and only if it is $k$-ray-normalescent.
\end{thm}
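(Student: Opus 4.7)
The plan is to prove the biconditional by handling the two directions separately: the forward direction ($k$-quotient $\Rightarrow$ $k$-ray-normalescent) is straightforward, while the reverse direction is the substantive one.

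For the forward direction, given $\nsg = \langle a_1, \ldots, a_k\rangle/d$, I would exhibit the cone
\[
\cC = \bigl\{(t, x_1, \ldots, x_k) \in \RR^{k+1} : x_i \ge 0 \text{ for all } i,\; dt = a_1 x_1 + \cdots + a_k x_k\bigr\}
\]
together with the projection $\pi(t, x_1, \ldots, x_k) = t$. The set $\cC$ lies inside the $k$-dimensional hyperplane $\{dt = \sum a_i x_i\}$, cut out there by the $k$ inequalities $x_i \ge 0$, so it has exactly $k$ extreme rays, each obtained by setting $x_j = 0$ for all $j \ne i$. A direct calculation gives $\pi(\cC \cap \ZZ^{k+1}) = \{t \in \NN : dt \in \langle a_1, \ldots, a_k\rangle\} = \nsg$.

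For the reverse direction, let $\nsg = \pi(\cC \cap \ZZ^m)$ with $\cC$ having primitive integer extreme rays $\vec v_1, \ldots, \vec v_k \in \ZZ^m$ and $a_i := \pi(\vec v_i) \ge 0$. First I would reduce to the simplicial case by replacing $\ZZ^m$ with a $\ZZ$-basis of $\ZZ^m \cap \mathrm{span}(\cC)$, making $\cC$ full-dimensional in $\RR^{\dim \cC}$. The natural attempt is then to take $b_i = a_i$ and choose $d$ so that $d \cdot (\cC \cap \ZZ^m) \subseteq \sum_i \NN \vec v_i$; such a $d$ exists by Gordan's lemma (take the lcm of denominators in the rational representations of the Hilbert basis of $\cC \cap \ZZ^m$ in terms of the $\vec v_i$). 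Applying $\pi$ to this containment yields $d\nsg \subseteq \langle a_i\rangle$, hence the easy inclusion $\nsg \subseteq \langle a_i\rangle/d$.

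The main obstacle will be the reverse inclusion $\langle a_i\rangle/d \subseteq \nsg$: given $c \in \NN^k$ with $dt = \sum c_i a_i$, one gets $w := \sum c_i \vec v_i \in \cC \cap \ZZ^m$ with $\pi(w) = dt$, but the natural candidate witness $w/d$ for $t$ lies in $\cC$ yet need not be integral. This naive approach can genuinely fail: for $\cC = \cone\bigl((1,1),(1,-1)\bigr)$ with $\pi(x,y) = 3x + y$, one has $\nsg = \langle 2, 3\rangle$ while $\langle 4,2\rangle/2 = \NN$. To overcome this, I would abandon the insistence $b_i = a_i$ and instead allow the $b_i$ to be $\pi$-images of other integer points in $\cC$, including non-extreme Hilbert basis elements (in the example, $b_1 = 2 = a_2$ and $b_2 = 3 = \pi(1,0)$ with $d = 1$ succeed). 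Concretely, I would analyze the sublattice $L := V^{-1}(d\ZZ^m) \subseteq \ZZ^k$ (where $V$ is the matrix of extreme rays) via Smith normal form to produce an explicit parametrization of $L \cap \NN^k$ that, after composition with $(1/d)\pi V$, realizes $\nsg$ using exactly $k$ values $b_i$ and a single divisor. The anticipated technical hurdle is arranging the construction to yield exactly $k$ numerator generators together with a compatible divisor $d$ that simultaneously enforces all the lattice divisibility conditions, particularly in the non-simplicial case where the extreme rays are linearly dependent and the quotient $\ZZ^k/L$ need not be cyclic.
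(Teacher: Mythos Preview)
Your forward direction is correct and essentially equivalent to the paper's (they multiply all but the first row of a unimodular matrix by $d$; your explicit cone in $\RR^{k+1}$ is a clean alternative).

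The reverse direction, however, has a genuine gap. You correctly locate the obstruction---when the quotient $\ZZ^k/V\ZZ^k$ is not cyclic (equivalently, the Smith normal form of the ray matrix is not $(1,d,\ldots,d)$), no direct parametrization of $L\cap\NN^k$ will collapse to a single denominator---but you offer no mechanism to get past it. Your final sentence in fact concedes this. (Also, your worry about the ``non-simplicial case'' is a red herring: after your own reduction step, a $k$-ray cone that is full-dimensional in $\RR^k$ \emph{is} simplicial; the non-cyclic problem already occurs there.) The paper's resolution is of a completely different nature and is absent from your plan: rather than analyze the given cone, one \emph{perturbs} it. Because $\nsg$ is cofinite, the cone can be enlarged slightly so that every matrix $A$ in an entire $\ell_\infty$-neighborhood of the (rescaled) ray matrix still satisfies $s(A)=\nsg$. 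The paper then proves (their Theorem on SNF) that if a matrix $A$ has coprime first row and SNF $(1,d,\ldots,d)$ then $s(A)=\langle\text{first row}\rangle/d$; since that SNF pattern is rare while $(1,\ldots,1,d)$ is generic, and since the adjugate operation swaps these two patterns, one instead searches a large cube near the \emph{inverse} of the ray matrix for an integer matrix $B$ whose last $k-1$ columns form a primitive set (forcing SNF $(1,\ldots,1,d)$), and then takes $A=\adj(B)$. The existence of such a $B$ is guaranteed by a density result on primitive sets of random integer vectors. This perturbation--adjugate--probabilistic chain is the substantive content of the converse, and nothing analogous appears in your proposal.
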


Note that a $k$-dimensional cone $\cC$ must have \emph{at least} $k$ extreme rays. If $\cC$ has exactly $k$ extreme rays, then it is called $\emph{simplicial}$, and $\pi(\cC \cap \NN^m)$ will be both $k$-normalescent and $k$-ray-normalescent. If $\cC$ has $\ell>k$ extreme rays, then $\pi(\cC \cap \NN^m)$ will be $\ell$-ray-normalescent --- and hence an $\ell$-quotient --- but it might not be a $k$-quotient. Indeed, the proof of the following proposition uses a cone in $\RR^3$ with four extreme rays, so its projection $\nsg$ will be 3-normalescent and a 4-quotient, but $\nsg$ is not a 3-quotient.  

\begin{prop}\label{p:counterexample}
The numerical semigroup $\nsg = \langle 101, 102, 110, 111 \rangle$ is 3-normalescent but not a 3-quotient.  
\end{prop}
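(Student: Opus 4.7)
My plan is to prove both parts of the proposition in turn, with the second part being the main content.

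For the $3$-normalescent claim, I would exhibit the cone
\[
\cC = \cone\bigl((1,0,0),\,(0,1,0),\,(1,0,1),\,(0,1,1)\bigr) = \bigl\{(x,y,z)\in\RR_{\ge 0}^3:\ x+y\ge z\bigr\}\subseteq\RR^3,
\]
a three-dimensional cone with four extreme rays, together with the linear form $\pi(x,y,z)=101x+102y+9z$. The four extreme rays map under $\pi$ to $101,\,102,\,110,\,111$, so $\pi(\cC\cap\ZZ^3)\supseteq\nsg$. For the opposite inclusion, given $(x,y,z)\in\cC\cap\ZZ^3$, the inequality $x+y\ge z$ lets one choose $z_1,z_2\in\NN$ with $z=z_1+z_2$, $z_1\le x$, and $z_2\le y$, whence
\[
\pi(x,y,z) = 101(x-z_1)+102(y-z_2)+110\,z_1+111\,z_2\;\in\;\nsg.
\]

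For the claim that $\nsg$ is not a $3$-quotient, I would argue by contradiction. Suppose $\nsg = \langle b_1, b_2, b_3\rangle/d$; after dividing by $\gcd(b_1,b_2,b_3)$ we may assume $\gcd(b_1,b_2,b_3)=1$. Setting $T=\langle b_1,b_2,b_3\rangle$, the defining property of the quotient gives the key identity $T\cap d\NN = d\nsg$, so the sub-semigroup of multiples of $d$ in $T$ must have exactly the four minimal generators $\{101d,\,102d,\,110d,\,111d\}$, while in particular $103d,\ldots,109d,112d$ cannot lie in $T$.

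The main obstacle is to rule out every choice of $(d;b_1,b_2,b_3)$. My plan is a case analysis on the residues $r_i = b_i\bmod d$:\ the semigroup $T\cap d\NN$ is generated by those $\sum x_i b_i$ whose exponent vectors satisfy $\sum x_i r_i\equiv 0\pmod d$, so one can describe its minimal generators in terms of the $b_i$ and the residue pattern, and then compare to the required four. For $d=1$ the contradiction is immediate ($T=\nsg$ would be three-generated, contradicting $\mathsf e(\nsg)=4$); for $d=2$ one handles each parity case in turn. If all $b_i$ are even, then $T=2T'$ with $T'=T/2=\nsg$ three-generated, again contradicting $\mathsf e(\nsg)=4$; if exactly one $b_i$ is odd, then $T\cap 2\NN=\langle 2b_i,b_j,b_k\rangle$ is three-generated, which cannot equal the four-generated $2\nsg$; and the remaining (two or three odd) cases produce at most four-generated candidates for $T\cap 2\NN$, but a direct check (using $b_1<b_2<b_3$ together with $2b_1=202$) forces some generator like $2b_2$ or $b_1+b_2$ to lie outside $\{202,204,220,222\}$. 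The general case proceeds analogously, exploiting the arithmetic relation $101+111=102+110$ and the precise gap structure of $\nsg$ to force $T$ to carry at least four distinct minimal generators, contradicting $\mathsf e(T)\le 3$.
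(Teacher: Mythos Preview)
Your argument for $3$-normalescence is correct and is essentially the paper's argument in different coordinates: the paper uses the cone on $(101,1,0),(102,1,0),(110,0,1),(111,0,1)$ with projection onto the first coordinate and triangulates into two unimodular simplicial cones, which is exactly your decomposition $(x,y,z)=(x-z_1)\,\ee_1+(y-z_2)\,\ee_2+z_1(\ee_1+\ee_3)+z_2(\ee_2+\ee_3)$.

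The second half, however, has a genuine gap. Your plan is a case analysis on the residues $b_i\bmod d$, but you give no mechanism to bound $d$, and the phrase ``the general case proceeds analogously'' is not an argument. Already for $d=2$ your sketch is incomplete (e.g.\ in the ``two odd, one even'' case $T\cap 2\NN$ is generated by $2b_1,2b_2,b_1+b_2,b_3$, and matching this set to $\{202,204,220,222\}$ requires a nontrivial check you do not supply), and for general $d$ the number of residue patterns and of minimal generators of $T\cap d\NN$ grows without bound. The relation $101+111=102+110$ and the gap structure of $\nsg$ are indeed the right ingredients, but nothing in your outline turns them into a finite verification.

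The paper avoids this entirely by passing through geometry. Using the easy direction of the main theorem (every $k$-quotient is $k$-ray-normalescent), it suffices to show $\nsg$ is not $3$-ray-normalescent. Assuming $\nsg=s(M')$ for some $M'\in\ZZ^{3\times 3}$, one picks preimages $\ww_1,\ldots,\ww_4\in\cone(M')$ of the four generators; the pigeonhole argument underlying Proposition~\ref{p:necessary} forces a linear relation among the $\ww_i$, and checking first coordinates shows the only possibility is $\ww_1+\ww_4=\ww_2+\ww_3$. One then forms four auxiliary lattice points $\vv_{12},\vv_{13},\vv_{24},\vv_{34}$ (specific combinations of the $\ww_i$) whose first coordinates $194,210,214,230$ are all gaps of $\nsg$, so none lie in $\cC'=\cone(M')$. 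Since $\cC'$ has only three facets, two of the $\vv$'s must violate the same facet inequality; but for every pair one checks that their sum lies in $\cC'$, so no single halfspace can exclude both --- a contradiction. This argument is uniform in $d$ and is the missing idea in your approach.
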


The above example has minimal dimension, in the sense that any 2-normalescent numerical semigroup is a 2-quotient (this follows from the fact that every 2-dimensional cone is simplicial).  This was observed in~\cite{proportionallymodularfullsemigroups}, using the fact that 2-quotients are precisely the family of proportionally modular numerical semigroups.  

The ability to translate between $k$-quotients and $k$-ray-normalescent semigroups is powerful, especially because it allows one to utilize tools from polyhedral geometry to prove things about $k$-quotients. For example, supposing $\nsg$ is a $k_1$-quotient and $\nsgtwo$ is a $k_2$-quotient, must $\nsg+\nsgtwo$ be a $(k_1+k_2)$-quotient?  This is not at all obvious, and we were unable to obtain a direct semigroup-theoretical proof in~\cite{BOW1}.  But the corresponding statement for normalescence (and ray-normalescence) is fairly easy to prove; we do so here to illustrate the power of the discrete geometry perspective.

\begin{thm}\label{thm:normsum}
If numerical semigroups $\nsg_1$ and $\nsg_2$ are $k_1$-(ray-)normalescent and $k_2$-(ray-)normalescent, respectively, then the numerical semigroup $\nsg_1+\nsg_2$ is $(k_1+k_2)$-(ray-)normalescent.
In particular, if $\nsg_1$ is a $k_1$-quotient and $\nsg_2$ is a $k_2$-quotient, then $\nsg_1+\nsg_2$ is a $(k_1+k_2)$-quotient. 
\end{thm}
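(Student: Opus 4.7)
The plan is to build the claimed cone for $\nsg_1 + \nsg_2$ by taking a Cartesian product of the cones witnessing the normalescence of $\nsg_1$ and $\nsg_2$, and to let the new projection be the sum of the two old projections acting on their respective coordinate blocks.

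More precisely, suppose $\nsg_i = \pi_i(\cC_i \cap \ZZ^{m_i})$ for $i = 1,2$, where $\cC_i \subseteq \RR^{m_i}$ is a rational polyhedral cone and $\pi_i : \RR^{m_i} \to \RR$ is a $\ZZ$-linear map with $\pi_i(\cC_i) \subseteq \RR_{\ge 0}$. Define
\[
\cC = \cC_1 \times \cC_2 \subseteq \RR^{m_1+m_2}, \qquad \pi(\xx_1,\xx_2) = \pi_1(\xx_1) + \pi_2(\xx_2).
\]
Then $\cC$ is a rational polyhedral cone, $\pi$ is $\ZZ$-linear, and $\pi(\cC) \subseteq \RR_{\ge 0}$. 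Because the product decomposition respects both the real structure and the integer lattice,
\[
\cC \cap \ZZ^{m_1+m_2} = (\cC_1 \cap \ZZ^{m_1}) \times (\cC_2 \cap \ZZ^{m_2}),
\]
so $\pi(\cC \cap \ZZ^{m_1+m_2}) = \pi_1(\cC_1 \cap \ZZ^{m_1}) + \pi_2(\cC_2 \cap \ZZ^{m_2}) = \nsg_1 + \nsg_2$.

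What remains is to verify that $\cC$ has the right combinatorial size. For the dimension version, $\dim \cC = \dim \cC_1 + \dim \cC_2 = k_1 + k_2$, so $\nsg_1 + \nsg_2$ is $(k_1+k_2)$-normalescent. For the ray version, if the extreme rays of $\cC_i$ are generated by $\vv^{(i)}_1,\ldots,\vv^{(i)}_{k_i}$, then the extreme rays of the product $\cC_1 \times \cC_2$ are exactly those generated by $(\vv^{(1)}_j, \mathbf{0})$ and $(\mathbf{0}, \vv^{(2)}_j)$, giving $k_1 + k_2$ extreme rays in total. This is a standard fact about products of pointed cones, so I would state it and give a one-line justification (a vector $(\xx_1,\xx_2) \in \cC$ lies on an extreme ray iff exactly one of $\xx_1,\xx_2$ is zero and the nonzero coordinate block lies on an extreme ray of the corresponding $\cC_i$).

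The quotient statement then follows immediately from the Main Theorem (Theorem~\ref{thm:main}): each $\nsg_i$ being a $k_i$-quotient is equivalent to being $k_i$-ray-normalescent, so the ray-normalescent case just proven shows $\nsg_1 + \nsg_2$ is $(k_1+k_2)$-ray-normalescent, hence a $(k_1+k_2)$-quotient. I do not expect any serious obstacle here; the only point requiring care is the extreme-ray count for a product cone, which is routine but deserves an explicit sentence.
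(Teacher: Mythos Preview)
Your proposal is correct and matches the paper's own proof almost exactly: the paper also takes $\cC = \cC_1 \times \cC_2$ (written there as the set of sums $\lambda_1(\xx_1,\mathbf{0}) + \lambda_2(\mathbf{0},\xx_2)$) with $\pi(\xx_1,\xx_2) = \pi_1(\xx_1) + \pi_2(\xx_2)$, observes that the integer points split as a product so the image is $\nsg_1 + \nsg_2$, and then notes that dimensions add and extreme rays come from those of the factors. The deduction of the quotient statement from Theorem~\ref{thm:main} is likewise the intended route.
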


\begin{proof}
For each $i = 1, 2$, let $\cC_i \subseteq \RR^{m_i}$ be a rational cone and $\pi_i$ be a projection such that $\nsg_i = \pi_i(\cC_i \cap \ZZ^{m_i})$.  
Let 
\[
\cC = \big\{\lambda_1(\vec x_1, \vec 0)+\lambda_2(\vec 0,\vec x_2):\ \vec x_i\in \cC_i,\, \lambda_i\ge 0\big\}\subseteq\RR^{m_1+m_2}\]
and $\pi(\vec x_1, \vec x_2) = \pi_1(\vec x_1) + \pi_2(\vec x_2)$ for $\vec x_i \in \R^{m_i}$.  
Notice any
\[
\lambda_1 (\vec x_1, \vec 0) + \lambda_2 (\vec 0, \vec x_2) \in \cC \cap \ZZ^{m_1+m_2}
\]
necessitates $\lambda_i \vec x_i \in \cC_i \cap \ZZ^{m_i}$, so
\[
\pi(\cC \cap \ZZ^{m_1+m_2})
= \pi_1(\cC_1 \cap \ZZ^{m_1}) + \pi_2(\cC_2 \cap \ZZ^{m_2})
= \nsg_1 + \nsg_2.
\]
The proof is complete upon observing that $\dim \cC=\dim \cC_1+\dim \cC_2$, giving us additivity of normalescence, and that each extreme ray of $\cC$ comes from an extreme ray of $\cC_1$ or of $\cC_2$, giving us additivity of ray-normalescence.
\end{proof}

% \begin{cor}
% \label{cor:quotientsum}
% If $\nsg_1$ is a $k_1$-quotient and $\nsg_2$ is a $k_2$-quotient, then $\nsg_1+\nsg_2$ is a $(k_1+k_2)$-quotient. 
% \end{cor}

\begin{example} \label{ex:bigquotient}
The final claim of Theorem~\ref{thm:normsum} (additivity of quotient rank) was proven in \cite[Theorem 2.3]{BOW1} with the additional hypothesis that the quotient denominators are coprime, in which case
\[\frac{\nsg}{c}+\frac{\nsgtwo}{d} = \frac{d\nsg+c\nsgtwo}{cd}.\]
While one can thus easily write
\[
\langle 23,24,25,29,30,31,32 \rangle
= \frac{\langle 23,25\rangle}{2}+\frac{\langle 29,32\rangle}{3} = \frac{3\langle 23,25\rangle + 2\langle 29,32\rangle}{2 \cdot 3}
= \frac{\langle 58,64,69,75\rangle}{6}
\]
as a 4-quotient, we could not find such a ``nice'' representation of
\[
\langle 23,24,25,29,30,31 \rangle
= \frac{\langle 23,25\rangle}{2} + \frac{\langle 29,31\rangle}{2}
\]
as a $4$-quotient (we were able to check by exhaustive search that is is not a 4-quotient with denominator ten or less). Using the tools developed in this paper, we can show that it is the 4-quotient
\[
\frac{\langle 13775465, 14996610, 18887728, 20196837  \rangle}{109340422}.
\]
These large values appear difficult to avoid in general when the denominators have a common factor.  Our proof of Theorem~\ref{thm:main} highlights the broader toolset the polyhedral geometric perspective brings to the table when studying numerical semigroup quotients. For example, it relies on the careful perturbation of the extreme rays of the cone and analysis of the expected Smith Normal Form of a large, random matrix (see \cite{Stanley2}).  
\end{example}

The paper is organized as follows.

In Section~\ref{sec:outline}, we develop our intuition about $k$-normalescence, see some examples, and outline the proof of Theorem~\ref{thm:main}. This includes a complete proof of the easier direction, that all $k$-quotients are $k$-ray-normalescent (Proposition~\ref{prop:easy}).

In Section~\ref{sec:ranks}, we prove Proposition~\ref{p:counterexample} and along the way develop a necessary condition for $k$-normalescence (Corollary~\ref{cor:necessary}).  This allows us to extend several results of~\cite{BOW1} about quotient rank to results about normalescence rank. In particular, we give explicit examples of numerical semigroups with arbitrarily large normalescense rank (Theorem~\ref{thm:noquotient}), as well as prove that ``almost all'' numerical semigroups have full normalescence rank (Theorem~\ref{thm:numericalbox}).

In Section~\ref{sec:aux}, we prove two propositions from Section~\ref{sec:outline} that require careful use of Smith Normal Form (see Definition~\ref{def:snf}), and in Section~\ref{sec:converse}, we use the ideas we have developed plus some more polyhedral geometry to prove the remaining (harder) implication of Theorem~\ref{thm:main}:\ that all $k$-ray-normalescent semigroups are $k$-quotients.

Mathematica~\cite{mathematica} code for many of the algorithms in this paper, including creating examples like Example~\ref{ex:bigquotient}, may be found on GitHub \cite{githubK}.

We close this section with one of our primary lingering questions.  

\begin{question}\label{q:decidable}
Is there an algorithm that computes normalescence rank?  How about quotient rank?
\end{question}

We conjecture that the answer is yes, but at the time of writing, it is not even known if these questions are decidable for $k \ge 3$ (the case $k = 2$ is addressed in~\cite{diophantineinequality}).  
% no algorithms to compute these ranks are yet known.

%%%%%%%%%%%%%%%%%%%%%%%%%%%%%%%%%%%%%%%%%%%%%%%%%%%%%%%%%%%%%%%%%%%%%%%%%
\section{Outline of Main Proof}%%%%%%%%%%%%%%%%%%%%%%%%%%%
\label{sec:outline}%%%%%%%%%%%%%%%%%%%%%%%%%%%%%%%%%%%%%%%%%%%%%%%%%%%%%%%%
%%%%%%%%%%%%%%%%%%%%%%%%%%%%%%%%%%%%%%%%%%%%%%%%%%%%%%%%%%%%%%%%%%%%%%%%%

We begin by stating a useful simplification of the problem (proved in Section~\ref{sec:aux}):\ in our equation $\nsg = \pi(\cC \cap \NN^k)$, we may assume that $\cC$ is full-dimensional and that $\pi$ is the projection onto the first coordinate.  

% \begin{prop}\label{prop:cofinite}
% If $\nsg$ is $k$-(ray-)normalescent, then there exists a full dimensional (simplicial) cone $\cC \subseteq \RR_{\ge 0}^k$ such that $\nsg = \pi(\cC \cap \NN^k)$, where $\pi(\vec x) = c_1x_1 + \cdots + c_kx_k$ and $\gcd(\nsg) = \gcd(c_1, \ldots, c_k)$.  
% \end{prop}

% \begin{proof}
% \toall{write}
% \end{proof}

\begin{prop}\label{prop:proj}
If $\nsg$ is $k$-normalescent with $\gcd(\nsg) = d$, then there exists a full-dimensional cone $\cC \subseteq \RR^k$ such that $\nsg = \pi(\cC \cap \ZZ^k)$, where $\pi(\vec x) = dx_1$ is given by projection onto a multiple of the first coordinate.
Furthermore, if $\nsg$  is $k$-ray-normalescent, then we may take $\cC$ to be simplicial (i.e., generated by $k$ linearly independent rays).
\end{prop}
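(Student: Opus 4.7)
The plan is two change-of-basis simplifications, with the second supplemented by a lifting step. For the first statement, given $\nsg = \pi(\cC \cap \ZZ^m)$ with $\dim \cC = k$, let $L = \mathrm{span}_\RR(\cC)$, pick a $\ZZ$-basis of the rank-$k$ sublattice $L \cap \ZZ^m$, and use it to identify $L$ with $\RR^k$ so that $L \cap \ZZ^m$ corresponds to $\ZZ^k$. This replaces $\cC$ by a full-dimensional cone $\cC' \subseteq \RR^k$ realizing the same $\nsg$ under a pulled-back integer projection $\tilde\pi(\vec x) = \aa \cdot \vec x$. Because $\cC'$ is full-dimensional, it contains (after a large dilation of an interior point) the vertices of a translated unit cube, so $\cC' \cap \ZZ^k$ generates $\ZZ^k$ as an additive group. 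Consequently $\nsg = \tilde\pi(\cC' \cap \ZZ^k)$ generates the same subgroup of $\ZZ$ as $\tilde\pi(\ZZ^k) = \gcd(\aa)\ZZ$, forcing $\gcd(\aa) = \gcd(\nsg) = d$. Writing $\aa = d\aa'$ with $\gcd(\aa') = 1$ and extending $\aa'$ to the first row of a matrix $A \in \mathrm{GL}_k(\ZZ)$, the change of variables $\vec y = A\vec x$ converts $\tilde\pi$ into $\vec y \mapsto dy_1$.

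For the $k$-ray-normalescent case, I first apply the same reduction (with $k$ replaced by the cone's actual dimension) to arrive at a full-dimensional cone $\cC \subseteq \RR^d$ with $k$ extreme rays $\vec v_1,\ldots,\vec v_k \in \ZZ^d$ and projection $\vec x \mapsto gx_1$, where $g = \gcd(\nsg)$ and $d \le k$. If $d = k$ then $\cC$ is already simplicial, so I assume $d < k$ and lift each ray to $\vec v_i' := (\vec v_i, N\vec u_i) \in \ZZ^k$, where the columns $\vec u_i$ of an integer $(k-d)\times k$ matrix $U$ are chosen so that $\bigl(\begin{smallmatrix} V \\ U \end{smallmatrix}\bigr)$ is nonsingular (a generic integer choice works), and where $N \in \ZZ_{\ge 1}$ is a scaling factor fixed below. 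The resulting cone $\cC' := \cone(\vec v_1',\ldots,\vec v_k') \subseteq \RR^k$ is full-dimensional and simplicial, and taking $\pi'(\vec x, \vec y) = gx_1$ gives $\pi'(\cC' \cap \ZZ^k) \subseteq \nsg$ immediately, because the first $d$ coordinates of any integer point of $\cC'$ lie in $\cC \cap \ZZ^d$.

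The main obstacle will be the reverse inclusion, showing that every element of $\nsg$ has an integer preimage in $\cC'$; this is where the scaling $N$ earns its keep. Let $\vec h_1,\ldots,\vec h_s$ be the (finite) Hilbert basis of the affine semigroup $\cC \cap \ZZ^d$, so $\nsg = \langle \pi(\vec h_1),\ldots,\pi(\vec h_s)\rangle$. Each $\vec h_j$ is a nonnegative rational combination $V\lambda^{(j)}$ of the rays, and choosing $N$ to be a common denominator of the entries of all the $\lambda^{(j)}$ makes $\vec y_j := NU\lambda^{(j)}$ an integer vector. Then $(\vec h_j, \vec y_j) \in \cC' \cap \ZZ^k$ is an integer lift of $\vec h_j$ with $\pi'$-value $\pi(\vec h_j)$, so every generator of $\nsg$ is realized; closure of $\cC' \cap \ZZ^k$ under addition then upgrades this to $\pi'(\cC' \cap \ZZ^k) = \nsg$, completing the proof.
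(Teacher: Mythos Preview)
Your proof is correct. The first part is essentially the paper's argument in coordinate-free language: where you pick a $\ZZ$-basis of $L\cap\ZZ^m$ and then extend $\aa'$ to a unimodular matrix, the paper carries out the same two changes of basis via explicit Smith Normal Form computations (their Steps~1--3). Your argument that $\gcd(\aa)=\gcd(\nsg)$ via a translated unit cube inside the full-dimensional cone is exactly the paper's Step~3 observation.

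For the ray-normalescent case your lifting differs from the paper's in a genuinely interesting way. The paper lifts $M_2=D_2V$ to $M_2'=D_2'V$ by appending diagonal entries equal to $t=\mathrm{lcm}$ of the maximal minors of $M_2$, and then shows $M_2'\xx\in\ZZ^\ell \Leftrightarrow M_2\xx\in\ZZ^k$ using Carath\'eodory's theorem plus an adjugate trick to clear denominators. You instead lift the extreme rays by arbitrary integer vectors $\vec u_i$ making the full matrix nonsingular, and handle the reverse inclusion by lifting only the finitely many Hilbert basis elements of $\cC\cap\ZZ^d$, choosing $N$ to clear their rational coefficients. Your approach is more flexible in the choice of lift (any generic $U$ works) but relies on finiteness of the Hilbert basis; the paper's approach is more explicit (a concrete formula for the lifted matrix) and gives the stronger statement that \emph{every} integer point of the low-dimensional cone lifts, not just the Hilbert basis. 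Either route is fine here. One small stylistic remark: you reuse $d$ for the cone's dimension after it was already fixed as $\gcd(\nsg)$ in the statement; renaming it (say to $r$) would avoid the collision.
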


\begin{remark}\label{rem:equiv}
Note that if $\gcd(\nsg)=d>1$, then Proposition~\ref{prop:proj} shows that, by taking the projection $\pi/d$, we may instead examine the semigroup obtained by dividing every element of $\nsg$ by $d$. In particular, $d\nsg$ is $k$-(ray)-normalescent if and only if $\nsg$ is. One can verify from definitions (see Remark 1.3 of \cite{BOW1}) that $d\nsg$ is a $k$-quotient if and only if $\nsg$ is. Therefore, without loss of generality, we may assume $\gcd(\nsg)=1$.
\end{remark}

\begin{notation}\label{not:projmatrix}
Unless otherwise stated, from now on any numerical semigroup $\nsg$ will be assumed to have $\gcd(\nsg) = 1$.  
Given a rank $k$ matrix $M\in \ZZ^{k\times \ell}$ with columns $\vec v_1, \ldots, \vec v_\ell \in \ZZ^k$, define 
\[
\sop(M)=\pi(\cC \cap \ZZ^k), 
\qquad \text{where} \qquad
\cC = \cone(M)=\cone(\vec v_1,\ldots,\vec v_\ell)
\]
and $\pi$ is the projection onto the first coordinate.  
\end{notation}

Using Notation \ref{not:projmatrix}, we can rephrase Proposition \ref{prop:proj} as follows.

\begin{cor} \label{cor:proj}
A numerical semigroup  $\nsg$ (with $\gcd(\nsg) = 1$) is $k$-normalescent if and only if there exist $\ell \ge k$ and $M \in \ZZ^{k\times \ell}$ such that $\sop(M)=\nsg$. Furthermore, $\nsg$ is $k$-ray-normalescent if we may take $\ell = k$ so that $M$ is a square matrix.
\end{cor}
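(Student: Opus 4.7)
The corollary is essentially a restatement of Proposition~\ref{prop:proj} in the matrix language established by Notation~\ref{not:projmatrix}, so the plan is to verify each direction of each equivalence by tracking definitions.

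\emph{First equivalence, forward direction.} Since $\nsg$ is cofinite, $\gcd(\nsg) = 1$, so I would apply Proposition~\ref{prop:proj} with $d = 1$ to obtain a full-dimensional rational polyhedral cone $\cC \subseteq \RR^k$ with $\nsg = \pi(\cC \cap \ZZ^k)$, where $\pi$ is projection onto the first coordinate. Because $\cC$ is a rational cone, each extreme ray is rationally generated; picking one integer point $\vec v_i$ on each extreme ray produces $\ell$ vectors whose nonnegative hull is $\cC$. Assembling them as the columns of $M \in \ZZ^{k \times \ell}$, full-dimensionality of $\cC$ forces $\ell \ge k$ and $\rank(M) = k$, and by construction $s(M) = \nsg$.

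\emph{First equivalence, reverse direction.} Given such an $M$, set $\cC = \cone(M) \subseteq \RR^k$; since $\rank(M) = k$, $\cC$ is $k$-dimensional and rational polyhedral. To match Definition~\ref{def:normrank}, the only condition that is not automatic is $\pi(\cC) \subseteq \RR_{\ge 0}$. However, each column $\vec v_i$ of $M$ is itself an integer point of $\cC$, so $\pi(\vec v_i) \in s(M) = \nsg \subseteq \NN$; since every point of $\cC$ is a nonnegative linear combination of the $\vec v_i$, linearity forces $\pi(\cC) \subseteq \RR_{\ge 0}$. Thus $\nsg$ is $k$-normalescent.

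\emph{Ray-normalescent refinement.} A square $k \times k$ rank-$k$ matrix has linearly independent columns, so $\cone(M)$ is simplicial with exactly $k$ extreme rays, yielding $k$-ray-normalescence directly from the previous paragraph. Conversely, the simplicial clause of Proposition~\ref{prop:proj} supplies a full-dimensional simplicial cone in $\RR^k$ generated by $k$ linearly independent rays, whose integer generators form a square rank-$k$ matrix; this is exactly the $\ell = k$ case.

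\emph{Where the difficulty actually lies.} There is no substantial obstacle in the corollary itself: the hard work of reducing to a full-dimensional cone in $\RR^k$ and to projection onto the first coordinate is already packaged in Proposition~\ref{prop:proj} (whose proof, deferred to Section~\ref{sec:aux}, is where Smith Normal Form does the real work). My only point of care would be to confirm that the sign condition $\pi(\cC) \subseteq \RR_{\ge 0}$ from Definition~\ref{def:normrank}, which is \emph{not} built into Notation~\ref{not:projmatrix}, is automatically recovered in the reverse direction --- and the observation that the columns of $M$ themselves lie in $\cC \cap \ZZ^k$ settles this cleanly.
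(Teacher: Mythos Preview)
Your proposal is correct and matches the paper's approach: the paper does not give a separate proof of this corollary, simply noting it is a rephrasing of Proposition~\ref{prop:proj} via Notation~\ref{not:projmatrix}, and you have faithfully unpacked that rephrasing. Your explicit verification that $\pi(\cC) \subseteq \RR_{\ge 0}$ is automatic in the reverse direction (since the columns of $M$ themselves lie in $\cC \cap \ZZ^k$ and hence project into $\nsg \subseteq \NN$) is a nice touch that the paper leaves implicit.
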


\begin{example}\label{ex:small}
We have noted that $\langle 11,13\rangle$ is 2-normalescent via the cone $\R_{\ge 0}^2$ and projection $(x,y)\mapsto 11x+13y$, but it is also 2-normalescent via the cone generated by $(11,5)$ and $(13,6)$ and projection $(x,y)\mapsto x$: this is a consequence of 
\[\det \begin{bmatrix} 11 & 13\\ 5 & 6\end{bmatrix}=\pm 1,\]
as we shall discuss in the next example.  Using Notation~\ref{not:projmatrix}, we write
\[\langle 11,13\rangle = \sop\left(\begin{bmatrix} 11 & 13\\ 5 & 6\end{bmatrix}\right).\]
\end{example}

\begin{example}\label{ex:unimodular}
Suppose that $M\in\Z^{k\times k}$ is a \emph{unimodular} matrix, that is, it has determinant $\pm 1$ and so is invertible over $\ZZ$. In this case, the corresponding cone $\cC=\cone(M)$ is also called \emph{unimodular}. If $\xx\in\cC\cap \Z^k$, then
\[\xx = M\left(M^{-1}\xx\right)\]
is a nonnegative \emph{integer} combination of the columns of $M$.  
% that is, $\xx$ is in the affine semigroup generated by the extreme rays of $\cC$. 
In particular, if $[a_1 \cdots a_k]$ is the first row of $M$, then
\[\sop(M)=\langle a_1,\ldots,a_k\rangle.\]
\end{example}

With the above reduction in hand, we readily prove the easier half of Theorem~\ref{thm:main}. We do this via the following fact that will be used again in Section \ref{sec:converse}.

\begin{lemma} \label{lem:timesD}
Let $M$ be any $k \times \ell$ integer matrix and let $D$ be the $k \times k$ diagonal matrix $\textup{diag}(1,d,d,\dots,d)$. Then $\sop(M) / d  = \sop(DM)$.  
\end{lemma}

\begin{proof}
The product $DM$ multiplies every row of $M$ by $d$ except the first.  Thus,
\begin{align*}
t\in \sop(M) / d & \Leftrightarrow dt\in \sop(M)\\
&\Leftrightarrow \exists \vec x\in\Z^{\ell -1}:\ (dt,\vec x)\in \cone(M)\\
&\Leftrightarrow \exists \vec x\in\Z^{\ell -1}:\ (t,\vec x/d)\in \cone(M)\\
&\Leftrightarrow \exists \vec x\in\Z^{\ell -1}:\ (t,\vec x)\in \cone(DM)\\
&\Leftrightarrow t\in \sop(DM),
\end{align*}  
which implies $\sop(M) / d  = \sop(DM)$.  
\end{proof}

\begin{prop}\label{prop:easy}
All $k$-quotients are $k$-ray-normalescent.
\end{prop}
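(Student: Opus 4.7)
The plan is to directly exhibit a simplicial realization. Write $\nsg = \langle a_1, \ldots, a_k \rangle/d$, and take ambient dimension $m = k+1$. I would let $\cC \subseteq \RR^{k+1}$ be the cone generated by the rays
\[
\vec r_i = (a_i,\; d\vec e_i) \in \RR \times \RR^k, \qquad i = 1,\ldots, k,
\]
where $\vec e_i$ is the $i$-th standard basis vector of $\RR^k$, and let $\pi:\RR^{k+1}\to\RR$ be projection onto the first coordinate. The last $k$ coordinates of these rays form $d I_k$, so $\vec r_1,\ldots,\vec r_k$ are linearly independent; thus $\cC$ is simplicial with exactly $k$ extreme rays, and $\pi(\cC) = \RR_{\ge 0}$ since each $a_i>0$.

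The main step is to check that $\pi(\cC \cap \ZZ^{k+1}) = \nsg$. Parametrizing a point of $\cC$ as $\sum_i \lambda_i \vec r_i = \bigl(\sum_i \lambda_i a_i,\; d\lambda_1,\ldots,d\lambda_k\bigr)$ with $\lambda_i\ge 0$, a lattice point in $\cC$ takes the form $(t, x_1,\ldots,x_k)$ where each $x_i = d\lambda_i$ is a nonnegative integer and $dt = \sum_i x_i a_i$. Projecting onto the first coordinate thus yields exactly the set of $t \in \NN$ such that $dt$ is a nonnegative integer combination of $a_1,\ldots,a_k$, i.e.\ $dt \in \langle a_1,\ldots,a_k\rangle$; by definition of quotient this is precisely $\nsg$. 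Conversely, any $t\in\nsg$ comes from such a witness $(x_1,\ldots,x_k)$ and hence lies in the image.

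There is no real obstacle here; the construction is elementary once one views the witness vector $(x_1,\ldots,x_k)$ as extra coordinates alongside $t$. The one thing to notice is why the ambient dimension is $k+1$ rather than $k$: the first coordinate must record $t$, while the remaining $k$ coordinates record the witness, and one needs all $k$ rays to be linearly independent (guaranteeing simplicial with $k$ extreme rays) without collapsing the projection. In particular this direction does not require the refinement of Proposition~\ref{prop:proj} that puts $\cC$ inside $\RR^k$, which would be a strictly stronger statement reserved for the harder direction of Theorem~\ref{thm:main}.
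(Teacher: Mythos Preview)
Your proof is correct and is in fact more self-contained than the paper's. The paper does not construct the cone directly; instead it first invokes Corollary~\ref{cor:proj} (which in turn rests on Proposition~\ref{prop:proj}, proved later via Smith Normal Form) to write $\langle a_1,\ldots,a_k\rangle = s(M)$ for some unimodular $M\in\ZZ^{k\times k}$ with first row $(a_1,\ldots,a_k)$, and then shows $\nsg = s(DM)$ where $D=\mathrm{diag}(1,d,\ldots,d)$. Your construction in $\RR^{k+1}$ sidesteps that machinery entirely by recording the witness vector $(x_1,\ldots,x_k)$ as extra coordinates, and your final paragraph is right that the definition of $k$-ray-normalescent permits any ambient dimension $m$.

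What the paper's route buys is structural: it lands in the $s(\cdot)$ framework of Notation~\ref{not:projmatrix} with a square matrix whose factorization $DM$ is precisely the shape that Theorem~\ref{thm:SNF} later inverts, and it feeds directly into the running Examples~\ref{ex:small2} and following. Your argument, by contrast, is logically independent of Proposition~\ref{prop:proj} and hence avoids a forward reference, at the cost of not producing the $k\times k$ representation that the rest of the paper works with.
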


\begin{proof}
Let a $k$-quotient $\nsg=\langle a_1, \ldots, a_k \rangle/d$ be given. First note that $\langle a_1,\ldots,a_k\rangle$ equals the image of $\RR_{\ge 0}^k \cap \ZZ^k$ under the projection $(x_1,\ldots,x_k) \mapsto a_1x_1 + \cdots + a_kx_k$, and thus is itself $k$-ray-normalescent.  
By Corollary~\ref{cor:proj}, there is some $M \in \Z^{k\times k}$ such that $\langle a_1,\ldots,a_k\rangle = \sop(M)$.  
Now, letting $D$ be the $k\times k$ diagonal matrix with diagonal $(1,d,\ldots,d)$, Lemma~\ref{lem:timesD} implies $\nsg = \sop(DM)$, so $\nsg$ is $k$-ray-normalescent.  
\end{proof}

\begin{example}\label{ex:small2}
Continuing Example~\ref{ex:small}, we have
\[\frac{\langle 11,13\rangle}{2} = \sop\left(\begin{bmatrix}1 & 0\\ 0 & 2\end{bmatrix}\cdot\begin{bmatrix} 11 & 13\\ 5 & 6\end{bmatrix}\right)=\sop\left(\begin{bmatrix}11 & 13\\10& 12\end{bmatrix}\right).\]
Note that $(12,11)=\frac{1}{2}(11,10)+\frac{1}{2}(13,12)$ is in the cone, and similarly $12\in \langle 11,13\rangle/2$.
\end{example}

Now we outline the proof of the converse, that $k$-ray-normalescent implies $k$-quotient, with the full proof relegated to Section~\ref{sec:converse}. We are given a $k\times k$ full rank matrix $M$, and we want to detect whether $\sop(M)$ can be written as a $k$-quotient. If we are lucky, the \emph{Smith Normal Form}~\cite{Smith} of $M$ has a special property, which will immediately imply that $\sop(M)$ is a $k$-quotient.

\begin{defn}\label{def:snf}
Given a matrix $M\in\Z^{m \times \ell}$, a \emph{Smith Normal Form} for $M$ is a factorization $M=UDV$ such that:

\begin{itemize}
\item $D$ is a (rectangular) diagonal matrix $D\in\Z^{m \times \ell}$,
\item the main diagonal $(d_1,\ldots,d_n)$ of $D$ (where $n=\min(\ell,m)$) consists of nonnegative integers $d_i$ such that $d_i$ divides $d_{i+1}$ for all $i$,
\item  $U \in \Z^{m \times m}$ and $V \in \Z^{\ell \times \ell}$ are unimodular matrices (that is, they have determinant $\pm 1$ and so are invertible over the integers).
\end{itemize}
\end{defn}

See~\cite{Newman} for a broad overview. In particular, every integer matrix may be put in Smith Normal Form, and the diagonal $(d_1,\ldots,d_n)$ is unique (so often we simply call $(d_1,\ldots,d_n)$ the Smith Normal Form, or SNF, of $M$). The matrices $U$ and $V$ need not be unique. Furthermore, each product $d_1 d_2 \cdots d_i$ equals the gcd of the $i \times i$ minors of $M$ (with the convention that $\gcd(0,0) = 0$). The Smith Normal Form of an integer matrix is a useful tool in discrete geometry and the theory of integer lattices.  See~\cite{Stanley1} for an introduction with applications to combinatorics.

The following condition shows what SNF we need in order to guarantee we have a $k$-quotient (we save the proof for Section~\ref{sec:aux}).  

\begin{thm}\label{thm:SNF}
Let $M \in \ZZ^{k\times k}$ be a full-rank matrix with positive first row $(a_1, \ldots, a_k)$.  If $a_1, \ldots, a_k$ are relatively prime and the SNF for $M$ is $(1,d,d,\ldots,d)$ for some positive integer $d$, then
\[
\sop(M)= \frac{\langle a_1, \ldots, a_k\rangle}{d},
\]
and so $\sop(M)$ is a $k$-quotient.
\end{thm}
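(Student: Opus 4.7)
The plan is to use the Smith Normal Form to produce a basis of $\ZZ^k$ well-adapted to $M\ZZ^k$, and then to exploit the leading ``$1$'' of the SNF together with the coprimality of $(a_1,\ldots,a_k)$ to control how the projection $\pi$ acts on $M\ZZ^k$. Writing $UMV=D=\operatorname{diag}(1,d,\ldots,d)$ with $U,V$ unimodular and setting $\vec b_i := U^{-1}\vec e_i$, I would first observe that $\vec b_1,\ldots,\vec b_k$ is a $\ZZ$-basis of $\ZZ^k$, and that (since $V$ is unimodular and $MV=U^{-1}D$) $M\ZZ^k = MV\ZZ^k = \ZZ\vec b_1 + d\ZZ\vec b_2 + \cdots + d\ZZ\vec b_k$; in particular $d\ZZ^k \subseteq M\ZZ^k$.

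The crux of the argument will be the following claim: any $\vec y\in M\ZZ^k$ with $d\mid\pi(\vec y)$ automatically satisfies $\vec y/d\in\ZZ^k$. To prove it, I would write $\vec y = c_1\vec b_1 + dc_2\vec b_2 + \cdots + dc_k\vec b_k$ with $c_i\in\ZZ$, so that reduction modulo $d$ gives $\pi(\vec y)\equiv c_1\pi(\vec b_1)\pmod d$. The coprimality hypothesis provides the other ingredient needed: on one hand $\pi(M\ZZ^k)=\langle a_1,\ldots,a_k\rangle_\ZZ=\ZZ$, while on the other hand this image, computed via the $\vec b_i$-basis, equals $\gcd(\pi(\vec b_1),d\pi(\vec b_2),\ldots,d\pi(\vec b_k))\ZZ$, forcing $\gcd(\pi(\vec b_1),d)=1$. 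Combining these gives $d\mid c_1$, so the coefficients of $\vec y/d$ in the $\vec b_i$-basis are all integers.

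Both inclusions in $s(M)=\langle a_1,\ldots,a_k\rangle/d$ should then fall out quickly. For $\subseteq$, I would take $\vec p\in\cC\cap\ZZ^k$ with $\pi(\vec p)=t$, note that $d\vec p\in\cC\cap d\ZZ^k\subseteq\cC\cap M\ZZ^k$, and write $d\vec p=\sum n_i\vec v_i$ with $n_i\in\ZZ$; linear independence of $\vec v_1,\ldots,\vec v_k$ together with $d\vec p\in\cC$ forces $n_i\geq 0$, yielding $dt=\sum n_ia_i\in\langle a_1,\ldots,a_k\rangle$. For $\supseteq$, starting from $dt=\sum n_ia_i$ with $n_i\in\NN$, I would set $\vec y=\sum n_i\vec v_i\in\cC\cap M\ZZ^k$; the claim above yields $\vec y/d\in\ZZ^k$, and since $\vec y/d\in\cC$ and $\pi(\vec y/d)=t$, it follows that $t\in s(M)$. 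The main conceptual obstacle I anticipate is pinning down the role of the leading ``$1$'' in the SNF: it is precisely what lets the $\vec b_1$-coefficient of elements of $M\ZZ^k$ range over all of $\ZZ$, so that integrality of $\vec y/d$ collapses to the single divisibility $d\mid c_1$ that the coprimality of $(a_1,\ldots,a_k)$ then forces.
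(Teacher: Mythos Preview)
Your argument is correct, and it takes a genuinely different route from the paper's.

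The paper does not work with an arbitrary SNF decomposition. Instead it constructs a \emph{particular} one: using the coprimality of the first row, it first finds a unimodular $V$ making the first row of $MV$ equal to $[1\;0\;\cdots\;0]$, then uses row operations (not touching the first row) to clear the first column and finish the SNF. This yields $UMV=D$ with $U$ having first row $[1\;0\;\cdots\;0]$ and $V^{-1}$ having first row $[a_1\;\cdots\;a_k]$. At that point the paper simply invokes the earlier result that for a unimodular $V^{-1}$ with first row $[a_1\;\cdots\;a_k]$ one has $s(DV^{-1})=\langle a_1,\ldots,a_k\rangle/d$, and observes that premultiplying by $U^{-1}$ (row operations fixing the first row) does not change $s(\cdot)$. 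Your proof, by contrast, takes any SNF and argues lattice-theoretically: the leading~$1$ gives $M\ZZ^k=\ZZ\vec b_1\oplus d\ZZ\vec b_2\oplus\cdots\oplus d\ZZ\vec b_k$, and coprimality of the $a_i$ forces $\pi(\vec b_1)$ to be a unit mod~$d$, from which your divisibility claim and both inclusions follow directly. Your approach is more self-contained---it does not lean on the earlier proposition---and it makes transparent exactly which hypothesis is doing what. The paper's approach, on the other hand, keeps the entire argument at the level of matrix manipulations and ties back more visibly to the surrounding machinery on $s(DV^{-1})$.
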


\begin{example}
Continuing Example~\ref{ex:small2}, 
\[M=\begin{bmatrix}11 & 13\\10& 12\end{bmatrix}=\begin{bmatrix}1& 0\\ 0 & 1\end{bmatrix}\cdot\begin{bmatrix}1 & 0\\ 0 & 2\end{bmatrix}\cdot\begin{bmatrix} 11 & 13\\ 5 & 6\end{bmatrix}\]
is in Smith Normal Form with diagonal $(1,2)$, and so we immediately recover
\[\sop(M)=\frac{\langle 11,13\rangle}{2} .\]
\end{example}

\begin{example}
\label{ex:reproof}
More generally, let $a_1,\ldots,a_k$ be relatively prime positive integers, and let $\vec a$ be the $1\times k$ matrix $[a_1\,\cdots\,a_k]$. Since the gcd of the $1 \times 1$ minors of $\vec a$ is $1$, the Smith Normal Form for $\vec a$ is $(1)$.  This means there exists a unimodular matrix~$V$ with
\begin{equation}
\label{eqn:transform}
\vec a \cdot V=[1]\cdot \vec a \cdot V = [1 \, 0 \, \cdots \, 0].
\end{equation}
Letting $M=V^{-1}$, we see that $[1 \, 0 \, \cdots \, 0]\cdot M=\vec a$, that is, the first row of $M$ is $\vec a$, and so $s(M)=\langle a_1,\ldots,a_k\rangle$, by Example~\ref{ex:unimodular}.

Now let $d$ be given, and let $D$ be the $k\times k$ diagonal matrix with diagonal $(1,d,\ldots,d)$. Then $IDM$ is in Smith Normal Form, has first row $\vec a$, and meets the criteria of Theorem~\ref{thm:SNF}. Therefore
\[\sop(IDM)=\frac{\langle a_1,\ldots,a_k\rangle}{d},\]
and we have another way of seeing that all $k$-quotients are $k$-ray-normalescent.
\end{example}

%\[
%M_1 = \begin{bmatrix} 11 & 13 \\ 1 & 1\end{bmatrix}.
%\]
%We can check that $\sop(M_1)=\langle 11,12,13\rangle$. The first row of $M_1$ is relatively prime, and its SNF is $(1,2)$, and so
%\[
%\sop(M_1) = \frac{\langle 11, 13\rangle}{2}.
%\]
%
%Sometimes we need to get a little lucky.
%Take
%\[
%M_2 = \begin{bmatrix} 5 & 11 & 13 \\ 1& 0& 0\\ 0 & 1 & 1\end{bmatrix}.
%\]
%We can check using Theorem~\ref{thm:normsum} that
%\[
%\sop(M_2) = \langle 5\rangle + \frac{\langle 11,13\rangle}{2}.
%\]
%The SNF of $M_2$ is $(1,1,2)$ which is not in the correct form. But the ray through $(10,2,0)$ is exactly the same as the one through $(5,1,0)$ and the SNF of
%\[
%M_2' = \begin{bmatrix} 10 & 11 & 13 \\ 2& 0& 0\\ 0 & 1 & 1\end{bmatrix}
%\]
%is $(1,2,2)$.  Since $10,11,13$ are relatively prime, we see that
%\[
%\sop(M_2) = \sop(M_2') = \frac{\langle 10,11,13\rangle}{2}.
%\]

Unfortunately, Theorem~\ref{thm:SNF}, requires us to be lucky:\ only if $M$ is of the required form will it immediately guarantee that $\sop(M)$ is a $k$-quotient. If we are not lucky, then the next key idea is to try perturbing $M$ slightly. The following heuristic indicates that there are plenty of perturbed matrices $M'$ that maintain $\sop(M')=\sop(M)$:  since we may assume that $\gcd(\nsg)=1$, we know that every sufficiently large positive integer is in $\nsg$; therefore, if we make $\cC'=\cone(M')$ just slightly larger than $\cC=\cone(M)$, $\cC'\setminus \cC$ will certainly contain integer points, but we should be able to ensure that their first coordinates are large enough to already be in $\nsg$, and therefore $\sop(M')=\sop(M)$. Hopefully by examining enough such perturbed $M'$ we can find one of them that meets the hypotheses of Theorem~\ref{thm:SNF}, which will prove that $\sop(M)=\sop(M')$ is a $k$-quotient.

\begin{example}\label{ex:idea}
Take
\[
M = \begin{bmatrix} 6 & 8 \\ 1 & 1\end{bmatrix}.
\]
We see $\sop(M)=\langle 6,7,8\rangle$, by checking that $\{(6,1),(7,1),(8,1)\}$ generates $\cone(M)\cap\Z^2$ as a normal affine semigroup. We want to prove that $\sop(M)$ is a 2-quotient.  The first row of $M$ is not relatively prime, so we cannot use Theorem~\ref{thm:SNF}. But let's look at a new matrix
\[
M' = \begin{bmatrix} 6 & 25 \\ 1 & 3\end{bmatrix}.
\]
The cone $\cC'=\cone(M')$ is slightly larger than the cone $\cC=\cone(M)$; for example $(25,3) \in \cC' \setminus \cC$. But we can check that, after projecting onto the first coordinate, we still have $\sop(M') = \sop(M)$; for example, $(25,3)$ projects to $25$, but we already had that $25 = 3\cdot 6 +1\cdot 7\in \langle 6,7,8\rangle = \sop(M)$. This new matrix $M'$ has first row relatively prime and SNF $(1,7)$, and so
\[
\langle 6,7,8\rangle=\sop(M) = \sop(M') = \frac{\langle 6,25\rangle}{7}
\]
is a 2-quotient.
\end{example}

Example \ref{ex:idea} involves a $2 \times 2$ matrix $M$ violating the hypothesis of Theorem~\ref{thm:SNF} that $M$'s first row must be relatively prime. However, when $k > 2$ the hypothesis that the SNF of $M$ is $(1,d,\ldots,d)$ turns out to be even more restrictive. Indeed, Wang and Stanley showed \cite{Stanley2} that most random integer matrices will have SNF $(1,\ldots,1,d)$, which is almost the ``opposite'' of what we want. This indicates that we will rarely be lucky enough to be able to apply Theorem~\ref{thm:SNF}.

%Fortunately, Example~\ref{ex:idea} contains the core idea for proving that a $k$-ray-normalescent semigroup is a $k$-quotient. If $\nsg=\sop(M)$ and $M$ does not meet the requirements of Theorem~\ref{thm:SNF}, then we try to find a matrix $M'$ such that $\sop(M')=\sop(M)$ and $M'$ meets the requirements. Since we may assume that $\gcd(\nsg)=1$, we know that every sufficiently large positive integer is in $\nsg$; therefore, if we make $\cC'=\cone(M')$ just slightly larger than $\cC=\cone(M)$, $\cC'\setminus \cC$ will certainly contain integer points, but we should be able to ensure that their first coordinates are large enough to already be in $\nsg$, and therefore $\sop(M')=\sop(M)$.

%Unfortunately, it seems extremely unlikely that a randomly chosen $M'$ would have the desired SNF of $(1,d,\dots,d)$.
In order to get around this problem, we use our one last trick. Recall that the adjugate of a full rank matrix $B\in\ZZ^{k\times k}$ is the integer matrix
$\adj(B)=\det(B)B^{-1}$. 

\begin{lemma}\label{lem:adj}
If $B \in \ZZ^{k \times k}$ with SNF $(1,\ldots,1,d)$, then $\adj(B)$ has SNF $(1,d,\ldots,d)$.  
\end{lemma}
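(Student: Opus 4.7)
The plan is to reduce to the diagonal case using the Smith Normal Form of $B$ itself. Write $UBV = D := \mathrm{diag}(1,1,\ldots,1,d)$ for unimodular $U, V \in \ZZ^{k\times k}$. The key algebraic input is that the adjugate is anti-multiplicative, $\adj(AB) = \adj(B)\,\adj(A)$, and that the adjugate of a unimodular matrix is again unimodular (since $\adj(U) = \det(U)\,U^{-1} = \pm U^{-1}$). Applying $\adj(\cdot)$ to the identity above therefore gives
\[
\adj(D) \;=\; \adj(V)\,\adj(B)\,\adj(U),
\]
with $\adj(U)$ and $\adj(V)$ unimodular. Consequently $\adj(B)$ and $\adj(D)$ are equivalent over $\ZZ$, so they share the same Smith Normal Form.

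It then remains only to read off the SNF of $\adj(D)$. Because $D$ is diagonal with diagonal $(1,\ldots,1,d)$, the matrix $\adj(D)$ is also diagonal, with $(i,i)$ entry equal to the determinant of the $(k-1)\times(k-1)$ principal submatrix of $D$ obtained by removing row and column $i$. For $i<k$ this submatrix has diagonal $(1,\ldots,1,d)$ and determinant $d$; for $i=k$ it has diagonal $(1,\ldots,1)$ and determinant $1$. Thus $\adj(D) = \mathrm{diag}(d,\ldots,d,1)$, and conjugating by the unimodular permutation that sends position $k$ to position $1$ shows that this has SNF $(1,d,d,\ldots,d)$, as required.

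I do not anticipate a genuine obstacle here: the argument is just careful bookkeeping once one commits to pushing the adjugate through the SNF factorization. The one small point that deserves explicit verification is the anti-multiplicativity $\adj(AB)=\adj(B)\,\adj(A)$ in the integer (possibly non-invertible-over-$\QQ$ could arise elsewhere, but here $\det B=\pm d\neq 0$) setting; this follows directly from $\adj(B) = \det(B) B^{-1}$ over $\QQ$ and the fact that both sides are polynomial in the entries. Alternatively, one could prove the lemma by invoking the minors description of SNF: the gcd of $i\times i$ minors of $\adj(B)$ can be expressed in terms of the gcds of $(k-i)\times(k-i)$ and $k\times k$ minors of $B$, which for SNF $(1,\ldots,1,d)$ immediately yield the desired invariants $1, d, d^2, \ldots, d^{k-1}$. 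The SNF-decomposition approach above is, however, shorter and more transparent.
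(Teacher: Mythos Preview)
Your proof is correct and takes essentially the same approach as the paper: both factor $B$ via its SNF and show that $\adj(B)$ is unimodularly equivalent to $\mathrm{diag}(d,\ldots,d,1)$. The only cosmetic difference is that the paper writes $\adj(B)=\det(B)\,B^{-1}=dV^{-1}D^{-1}U^{-1}=V^{-1}(dD^{-1})U^{-1}$ directly rather than invoking anti-multiplicativity of the adjugate, but since $\adj(D)=dD^{-1}$ and $\adj(U)=\pm U^{-1}$, $\adj(V)=\pm V^{-1}$, the two computations are the same up to harmless signs.
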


\begin{proof}
Say we have $B=UDV$ in SNF, where $U$ and $V$ are unimodular matrices and $D$ is the diagonal matrix with diagonal $(1,\ldots,1,d)$.  Let $A=\adj(B)$. Then
\[A=\det(B)B^{-1}=dB^{-1}=dV^{-1}D^{-1}U^{-1}=V^{-1}\left(dD^{-1}\right)U^{-1}.\]
We see that $dD^{-1}$ is a diagonal matrix with diagonal $(d,\ldots,d,1)$. Therefore, after switching the first and last rows/columns with elementary operations, we see that $A=\adj(B)$ has SNF $(1,d,\ldots,d)$. 
\end{proof}

So our final step is this:\ let $M'$ be a matrix such that any small integer perturbation of $M'$ will still project to the numerical semigroup $\sop(M)$. Let $B$ be a matrix that is a slight integer perturbation of $\adj(M')$. Since $\adj(\adj(M'))$ is a multiple of $M'$, it generates the same cone, that is,
\[\sop(\adj(\adj(M')))=\sop(M')=\sop(M).\]
We will see that $A=\adj(B)$ is a small perturbation of this multiple of $M'$, so $\sop(A)=\sop(M)$. If $B$ has SNF $(1,\ldots,1,d)$, then Lemma~\ref{lem:adj} implies that $A$ will have SNF $(1,d,\ldots,d)$, and we will be nearly done! Fortunately,  \cite{Stanley2} indicates that such $B$ should be easy to find, and \cite{EW} provides the exact result that we need. We leave the details to Section~\ref{sec:converse}. This process is how we obtained the 4-quotient
\[ \frac{\langle 23,25\rangle}{2} + \frac{\langle 29,31\rangle}{2} = \frac{\langle 13775465, 14996610, 18887728, 20196837  \rangle}{109340422}\]
in Example~\ref{ex:bigquotient}. The process of finding an $M'$ that allows ``wiggle room'' for perturbation and then taking adjugates twice contributes to the explosion in magnitudes of the generators and denominator.

%%%%%%%%%%%%%%%%%%%%%%%%%%%%%%%%%%%%%%%%%%%%%%%%%%%%%%%%%%%%%%%%%%%%%%%%%
\section{Quotient Rank and Normalescence Rank}%%%%%%%%%%%%%%%%%%%%%%%%%%%
\label{sec:ranks}%%%%%%%%%%%%%%%%%%%%%%%%%%%%%%%%%%%%%%%%%%%%%%%%%%%%%%%%
%%%%%%%%%%%%%%%%%%%%%%%%%%%%%%%%%%%%%%%%%%%%%%%%%%%%%%%%%%%%%%%%%%%%%%%%%

Theorem~2.1 of our first paper~\cite{BOW1} identifies a necessary condition for a given numerical semigroup to be a $k$-quotient.  This condition was the principal ingredient in several subsequent results in~\cite{BOW1}, including (for any given $k\ge 3$) the first known example of a numerical semigroup that is not a $k$-quotient. We now prove that the same condition is also necessary for $k$-normalesence.

\begin{prop}\label{p:necessary}
Suppose $\nsg$ is $k$-normalescent.  Given any elements $s_1,\ldots,s_p \in \nsg$ with $p > k$, there exists a nonempty subset $I\subseteq \{1,\ldots,p\}$ such that $\tfrac{1}{2}\sum_{i\in I}s_i \in \nsg$.
\end{prop}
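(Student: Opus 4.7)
The plan is to imitate the familiar pigeonhole/Freiman-style argument that proves the analogous statement for $k$-quotients in~\cite{BOW1}, but carried out at the level of the cone $\cC$ rather than the generators. Write $\nsg = \pi(\cC \cap \ZZ^m)$ with $\cC \subseteq \RR^m$ a $k$-dimensional rational polyhedral cone. For each $s_i$, fix a lift $\vec v_i \in \cC \cap \ZZ^m$ with $\pi(\vec v_i) = s_i$. The idea is to produce a nonempty $I \subseteq \{1,\ldots,p\}$ such that $\tfrac12 \sum_{i \in I} \vec v_i$ lies in $\cC \cap \ZZ^m$; applying $\pi$ then gives the desired conclusion, since $\pi$ is linear with integer coefficients.

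The containment in $\cC$ will be automatic: as $\cC$ is a cone, any nonnegative rational combination of $\vec v_1, \ldots, \vec v_p$ lies in $\cC$. The real content is the integrality of $\tfrac12 \sum_{i \in I} \vec v_i$. To get this, let $V = \mathrm{span}(\cC) \subseteq \RR^m$ and $L = V \cap \ZZ^m$; since $\cC$ is $k$-dimensional and rational, $L$ is a lattice of rank exactly $k$, so the quotient $L/2L$ is an $\mathbb{F}_2$-vector space of dimension $k$. The vectors $\vec v_1, \ldots, \vec v_p$ all lie in $L$, and since $p > k$, their images in $L/2L$ must be $\mathbb{F}_2$-linearly dependent. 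This yields coefficients $\epsilon_i \in \{0,1\}$, not all zero, with
\[
\sum_{i=1}^p \epsilon_i \vec v_i \in 2L.
\]
Setting $I = \{i : \epsilon_i = 1\}$, the vector $\tfrac12 \sum_{i \in I} \vec v_i$ then lies in $L \subseteq \ZZ^m$, and by the cone argument above it also lies in $\cC$. Applying $\pi$ gives $\tfrac12 \sum_{i \in I} s_i \in \nsg$, as required.

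No part of this feels like a serious obstacle; the only point that needs a line of care is justifying that $L$ has rank precisely $k$ (so that $L/2L$ really has only $2^k$ elements and the pigeonhole/dependency step goes through with the stated bound $p > k$). This uses that $\cC$ is \emph{rational} and $k$-dimensional: its extreme rays span $V$ and lie in $\QQ^m$, so $V$ is a rational subspace and $L = V \cap \ZZ^m$ is indeed a rank-$k$ sublattice of $\ZZ^m$. Everything else is linear algebra over $\mathbb{F}_2$ plus the convexity of $\cC$.
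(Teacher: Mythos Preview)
Your proof is correct and follows essentially the same approach as the paper: both lift the $s_i$ to integer points in the cone and use a parity argument in a rank-$k$ lattice to find a nonempty subsum divisible by $2$. The only cosmetic differences are that the paper first invokes Proposition~\ref{prop:proj} to make $\cC$ full-dimensional in $\RR^k$ (whereas you work with the rank-$k$ lattice $L=V\cap\ZZ^m$ directly), and the paper phrases the pigeonhole step over all $2^p$ subset-sums modulo $2$ rather than as an $\mathbb{F}_2$-linear dependence among the $\vec v_i$ themselves.
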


\begin{proof}
Suppose $\nsg = \sop(M)$ for some full rank integer matrix $M$ with $k$ rows, define $\cC=\cone(M) \subseteq \RR^k$, and fix $\vec b_1, \ldots, \vec b_p \in \cC \cap \ZZ^k$ so that $s_i$ is the first coordinate of~$\vec b_i$.  For a vector $\vec v \in \ZZ^k$, define $\vec v\bmod 2 \in \ZZ_2^k$ to be the coordinate-wise reduction of $\vec v$ modulo 2.  For each subset $J \subseteq \{1,\ldots,p\}$, we define $\vec b_J=\sum_{j\in J}\vec b_j$, and consider the reduction $\vec b_J \bmod 2$.  There are $2^p$ possible sets $J$ and $2^k$ possible values for $\vec b_J \bmod 2$, with $p>k$, so by the pigeonhole principle there must be distinct sets $J_1$ and $J_2$ with
\[
\vec b_{J_1} \bmod 2 = \vec b_{J_2} \bmod 2.
\]
Let $I=(J_1\setminus J_2)\cup (J_2\setminus J_1)$,
which is nonempty.
Then
\[
\vec b_I \bmod 2 = \vec b_{J_1} + \vec b_{J_2} - 2\vec b_{J_1\cap J_2} \bmod 2 = \vec 0,\]
so $\tfrac{1}{2}\vec b_I$ is an integer vector.
Therefore $\tfrac{1}{2}\vec b_I \in \cC \cap \ZZ^k$, so
\[
\frac{1}{2}
\sum_{i\in I} s_i \in \sop(M) = \nsg,
\]
as desired.  
\end{proof}

We record here three subsequent results whose proofs are identical to those of Corollary~2.2 and Theorems~3.1 and~4.1 of~\cite{BOW1}, respectively, now that Proposition~\ref{p:necessary} has been obtained.  

\begin{cor}\label{cor:necessary}
Let $\nsg = \langle a_1, \dots, a_n \rangle$ be a numerical semigroup.  If $\nsg$ does not have full normalescence rank, then there is a nonempty $I \subseteq \{1,\ldots,n\}$ such that
\[
\sum_{i\in I}a_i \in \langle a_j : j\notin I \rangle.
\]
\end{cor}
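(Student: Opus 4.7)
The plan is to apply Proposition~\ref{p:necessary} directly to the minimal generators themselves, taking $s_1 = a_1, \ldots, s_n = a_n$. Since $\nsg$ is assumed not to have full normalescence rank, its normalescence rank is at most $n-1$, so $\nsg$ is $k$-normalescent for some $k \le n-1 < n = p$. The proposition then provides a nonempty $I \subseteq \{1, \ldots, n\}$ with $t := \tfrac{1}{2}\sum_{i \in I} a_i \in \nsg$.

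Next I would convert this membership statement into the form asserted by the corollary. Expanding $t$ as a nonnegative integer combination $t = \sum_{j=1}^n c_j a_j$ of the generators yields
\[
\sum_{i \in I} a_i \;=\; \sum_{j=1}^n 2 c_j\, a_j.
\]
The idea is to refine $I$ to the subset $I' = \{i \in I : c_i = 0\}$. Rearranging the above identity and using that the $c_i$ for $i \in I'$ vanish, one gets
\[
\sum_{i \in I'} a_i \;=\; \sum_{j \notin I} 2c_j\, a_j \;+\; \sum_{i \in I \setminus I'} (2c_i - 1)\, a_i,
\]
which expresses $\sum_{i \in I'} a_i$ as a nonnegative integer combination of the generators $a_j$ with $j \notin I'$ (every coefficient on the right is nonnegative, since $c_i \ge 1$ for $i \in I \setminus I'$). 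This is exactly the desired conclusion.

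The one step that must be verified carefully is that $I'$ is nonempty. The argument is short:\ if instead $c_i \ge 1$ for every $i \in I$, then $\sum_j 2c_j a_j \ge \sum_{i \in I} 2 c_i a_i \ge 2 \sum_{i \in I} a_i$, contradicting the equality $\sum_j 2c_j a_j = \sum_{i \in I} a_i$ because $I$ is nonempty and the $a_i$ are positive. Beyond this, the proof is essentially mechanical; the real conceptual content is carried by Proposition~\ref{p:necessary}, and so — as the excerpt already remarks — this argument is word-for-word the same as the quotient-rank version in~\cite{BOW1}, with Proposition~\ref{p:necessary} simply replacing its $k$-quotient analogue.
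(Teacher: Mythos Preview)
Your proof is correct and follows exactly the approach the paper intends: the paper explicitly states that the argument is identical to that of \cite[Corollary~2.2]{BOW1} once Proposition~\ref{p:necessary} is in hand, and what you wrote is precisely that deduction---apply the proposition to the generators, then pass from the half-sum membership to the desired form by subtracting off the generators that appear in the chosen representation. Your verification that the refined set $I'$ is nonempty is the only nontrivial step, and it is handled correctly.
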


\begin{thm}\label{thm:noquotient}
Given positive integers $k$ and $a \ge 2^k$, 
% Define $a_i = 2a + 2^i$ for $i=0,1,\dots,k$. Then 
the numerical semigroup
\[
\nsg = \langle 2a + 2^i : 0 \le i \le k \rangle
\]
has full normalescence rank $k+1$. 
%Moreover, writing $N = (2k + 1)a$ and letting $\omega(j)$ denote the number of 1's in the binary representation of $j$, the semigroup
%\[
%\nsg + \langle N - (\omega(j)a + j) : 1 \le j < 2^k \rangle
%\]
%cannot be written as an intersection of $k$-normalescent semigroups.  
\end{thm}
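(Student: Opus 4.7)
The plan is to verify the two requirements for ``full normalescence rank $k+1$'': that the embedding dimension $\mathsf{e}(\nsg)$ equals $k+1$, and that the normalescence rank of $\nsg$ is at least $k+1$. For the embedding dimension, I would use the hypothesis $a \ge 2^k$, which places each generator $2a+2^i$ in the interval $(2a, 3a]$; consequently, any sum of two or more generators is at least $4a+2 > 3a$ and thus larger than every single generator, forcing the listed $k+1$ elements to be a minimal generating set.

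To obtain the lower bound on normalescence rank, I would apply Corollary~\ref{cor:necessary}: it suffices to show that for every nonempty $I \subseteq \{0, \ldots, k\}$,
\[
\sum_{i \in I}(2a + 2^i) \notin \bigl\langle 2a + 2^j : j \notin I \bigr\rangle.
\]
Arguing by contradiction, I would assume the containment and write $\sum_{i \in I}(2a + 2^i) = \sum_{j \notin I}c_j(2a + 2^j)$ for some $c_j \in \NN$. Setting $m = |I|$, $t = \sum_j c_j$, $X = \sum_{i \in I}2^i$, and $Y = \sum_{j \notin I}c_j 2^j$, separating the multiples of $a$ yields the core identity
\[
2(m - t)a \;=\; Y - X.
\]
Because $X \le 2^{k+1}-1 < 2a$ and $Y \ge 0$, the case $t > m$ is immediate (the right side is too small in absolute value to equal $2(t-m)a \ge 2a$). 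The case $t = m$ reduces to $Y = X$ with $\sum c_j = m$ and $c_j = 0$ for $j \in I$, which is impossible because the binary expansion of $X$ is the unique minimum-weight representation of $X$ as a sum of nonnegative powers of $2$, and it uses exactly the exponents in $I$; any alternative representation restricted to exponents outside $I$ must have total weight strictly greater than $m$.

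The main obstacle is the remaining case $t < m$. Writing $r = m - t \ge 1$, the identity becomes $Y = X + 2ra$, and the bounds $Y \ge 2ra \ge r \cdot 2^{k+1}$ (using $a \ge 2^k$) together with $Y \le t \cdot 2^k$ yield $m \ge 3r+1$, which is a strong restriction on the remaining cases. A parity analysis of $Y - X = 2ra$ settles the subcase $0 \in I$ cleanly: then $X$ is odd, but the constraint $c_0 = 0$ forces $Y = \sum_{j \ge 1}c_j 2^j$ to be even, contradicting equality with the even number $X + 2ra$ after shifting parity. The subcase $0 \notin I$ is the most delicate, requiring $2$-adic analysis: I would reduce modulo $2^{\min I + 1}$ and exploit the constraint $c_{\min I} = 0$ (together with the size bound $m \ge 3r+1$ which keeps the low-order contributions of $Y$ small enough to avoid wrap-around) to derive a contradiction between the residue class of $Y$ and that of $X + 2ra$. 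This detailed casework parallels the argument in \cite[Theorem~3.1]{BOW1}, now applied via Proposition~\ref{p:necessary} rather than its quotient-rank analog; once every nonempty $I$ is ruled out, Corollary~\ref{cor:necessary} delivers the conclusion that $\nsg$ has full normalescence rank $k+1$.
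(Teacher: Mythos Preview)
The paper does not give a self-contained proof here; it asserts that the argument of \cite[Theorem~3.1]{BOW1} carries over verbatim once Proposition~\ref{p:necessary} is available. Your treatment of the embedding dimension and of the cases $t>m$, $t=m$, and the subcase $0\in I$ of $t<m$ is fine. But the remaining subcase is not merely ``delicate'': the very claim you are trying to establish---that $\sum_{i\in I}(2a+2^i)\notin\langle 2a+2^j:j\notin I\rangle$ for every nonempty $I$---actually fails for some pairs $(k,a)$ with $a\ge 2^k$, so your route through Corollary~\ref{cor:necessary} cannot be completed.

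Concretely, take $k=5$ and $a=33\ge 2^5$. With $I=\{1,2,3,4\}$ (so $m=4$, $0\notin I$) one has
\[
\sum_{i\in I}(2a+2^i)=68+70+74+82=294=3\cdot 98=3(2a+2^5)\in\bigl\langle\, 2a+2^j:j\notin I\,\bigr\rangle,
\]
with $t=3<m$. Your proposed reduction modulo $2^{\min I+1}=4$ gives $X\equiv 2$, $Y\equiv 0$, and $2ra=66\equiv 2\pmod 4$, which is perfectly consistent, so no contradiction results. More generally, for every $k\ge 5$ the choice $a=(k-3)2^{k-1}+1\ge 2^k$ together with $I=\{1,\dots,k-1\}$ gives $\sum_{i\in I}(2a+2^i)=(k-2)(2a+2^k)$. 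Switching to Proposition~\ref{p:necessary} applied to the generators does not rescue the approach either: for $k=5$, $a=33$, and $I=\{1,2,3,4,5\}$ one finds $\tfrac{1}{2}\sum_{i\in I}(2a+2^i)=196=2\cdot 98\in\nsg$. Whatever the argument in \cite[Theorem~3.1]{BOW1} actually does, it must involve more than verifying the hypothesis of Corollary~\ref{cor:necessary} on the minimal generators; your outline does not supply that missing ingredient.
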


\begin{thm}\label{thm:numericalbox}
Fix $n \in\ZZ_+$, and let $q\in\ZZ_+$ vary.  If $a_1, \ldots, a_n \in \{1,\ldots,q\}$ are uniformly and independently chosen, then the probability that $\nsg = \langle a_1, \dots, a_n \rangle$ has full normalescence rank tends to 1 as $q \to \infty$.  More precisely, this probability is $1 - O(q^{-\frac{1}{n}})$.  
\end{thm}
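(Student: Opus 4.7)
The plan is to parallel \cite[Theorem~4.1]{BOW1}, which established the analogous result for quotient rank, using Corollary~\ref{cor:necessary} as the required necessary condition. By that corollary (contrapositive), $\nsg$ will have full normalescence rank provided no nonempty $I\subseteq\{1,\ldots,n\}$ satisfies $\sum_{i\in I}a_i\in\langle a_j:j\notin I\rangle$, so the plan is to bound the probability of the union over $I$ of these bad events.

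First I would dispose of the case $I=\{1,\ldots,n\}$, for which the right-hand side is $\langle\varnothing\rangle=\{0\}$ while the left-hand side is positive; this event is vacuously empty. For each proper nonempty $I$, write $E_I$ for the event that $\sum_{i\in I}a_i\in H_I$, where $H_I:=\langle a_j:j\notin I\rangle$. A union bound over the $2^n-2$ remaining subsets reduces the task to showing that the probability of $E_I$ is $O(q^{-1/n})$ for each individual $I$.

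To control the probability of $E_I$, I would condition on the tuple $\{a_j\}_{j\notin I}$ and use two ingredients: (i) a standard count of lattice points in the scaled simplex $\{x_j\ge 0:\sum_{j\notin I}x_ja_j\le N\}$ to bound the density of $H_I$ in the range $[1,nq]$, and (ii) the near-uniformity of the distribution of the sum of $|I|$ independent uniform random integers modulo small moduli. Combining these gives a conditional probability bound that, once averaged against the distribution of $\{a_j\}_{j\notin I}$, matches the claimed $q^{-1/n}$ rate. A useful preliminary reduction is to restrict to the high-probability event that every $a_j$ exceeds $q^{1-1/n}$, which happens with probability $1-O(nq^{-1/n})$ and makes the density bounds uniform.

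The main obstacle will be establishing the $O(q^{-1/n})$ bound uniformly over all proper nonempty subsets $I$. The difficulty is greatest when $|I|$ is small (so $H_I$ is generated by many $a_j$'s and can have a dense initial segment) or when the conditional minimum $\min_{j\notin I}a_j$ is atypically small; these are exactly the scenarios that the proof of \cite[Theorem~4.1]{BOW1} handles, and the argument transports verbatim once Corollary~\ref{cor:necessary} is substituted for its quotient-rank analog. Summing the per-$I$ bounds then yields the claimed $1-O(q^{-1/n})$ lower bound on the probability of full normalescence rank.
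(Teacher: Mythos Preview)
Your proposal is correct and matches the paper's approach exactly: the paper does not give an independent proof but simply states that the argument is identical to that of \cite[Theorem~4.1]{BOW1} once Proposition~\ref{p:necessary} (and hence Corollary~\ref{cor:necessary}) replaces its quotient-rank analog, which is precisely what you propose. Your sketch of the underlying union-bound and conditioning argument is additional detail beyond what the paper provides, but it is consistent with that transported proof.
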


At the time of writing, Proposition~\ref{p:necessary} and Corollary~\ref{cor:necessary} are the only known necessary conditions for $k$-normalescence, and their analogous results in~\cite{BOW1} are the only known necessary conditions for $k$-quotientability.  In particular, these conditions fail to distinguish $k$-normalescent semigroups from $k$-quotients. We now prove Proposition~\ref{p:counterexample}, which shows that they are indeed distinct concepts.

\begin{proof}[Proof of Proposition~\ref{p:counterexample}]
We first show that $\nsg$ is 3-normalescent. Let
\[
\uu_1 = (101, 1, 0), \quad
\uu_2 = (102, 1, 0), \quad
\uu_3 = (110, 0, 1), \quad
\text{and} \quad
\uu_4 = (111, 0, 1),
\]
let $M$ be the $3\times 4$ matrix with columns $\uu_1, \uu_2, \uu_3, \uu_4$, and let $\cC = \cone(M) \subseteq \RR^3$. Then $\sop(M)$ contains the generators of $\nsg$ and therefore contains $\nsg$. On the other hand, let $M_1$ have columns $\uu_1, \uu_3, \uu_4$ and $M_2$ have columns $\uu_1, \uu_2, \uu_4$. Both of these matrices are unimodular, and so (as discussed in Example~\ref{ex:unimodular}), we have
\[
\sop(M_1)=\langle 101, 110, 111\rangle
\quad\text{and}\quad
\sop(M_2)=\langle 101, 102, 111\rangle.
\]
But $\cone(M_1)\cup\cone(M_2)=\cC$, so in fact
\[\sop(M)=\sop(M_1)\cup \sop(M_2)=\nsg.\]

We now show that $\nsg$ is not 3-ray-normalescent, which will complete the proof by Proposition~\ref{prop:easy}.  Suppose by way of contradiction that $\nsg = \sop(M')$ for some $M'\in\Z^{3\times 3}$.  Let $\ww_1$, $\ww_2$, $\ww_3$, and $\ww_4$ be lattice points in $\cC'=\cone(M')$ whose respective first coordinates are 101, 102, 110, and 111, so that they project to the minimal generators of $\nsg$.
Applying Corollary~\ref{cor:necessary} to the generators of $\nsg$, we conclude that there exists a nonempty set $I \subseteq \{1,\ldots,4\}$ such that $\sum_{i\in I}\ww_i \in \langle \ww_j: \, j\notin I \rangle$ (the statement of Corollary~\ref{cor:necessary} only concerns projections of the $\ww_j$, but one can readily observe in the proof of Proposition~\ref{p:necessary} that the claimed expression descends from one involving vectors).  By considering the first coordinates of the four points, we can easily check that the only possibility is that $\ww_1 + \ww_4 = \ww_2 + \ww_3$.

Consider the lattice points
$$\begin{array}{r@{}c@{}l@{\qquad}r@{}c@{}l}
\vv_{12} &{}={}& 2 \ww_2 + \ww_1 - \ww_4, &
\vv_{24} &{}={}& 2 \ww_2 + \ww_4 - \ww_1, \\
\vv_{13} &{}={}& 2 \ww_3 + \ww_1 - \ww_4, &
\vv_{34} &{}={}& 2 \ww_3 + \ww_4 - \ww_1.
\end{array}$$
Their respective first coordinates are 194, 210, 214, and 230, none of which lie in $\nsg$, so none of them belong to $\cC'$. By the pigeonhole principle, one of the three inequalities that define $\cC'$ must be violated by at least two of the four points.

However, note that
\[
\vv_{12} + \vv_{13} = (2\ww_2 + 2\ww_3) + (2\ww_1 - 2\ww_4) = (2\ww_1 + 2\ww_4) + (2\ww_1 - 2\ww_4) = 4 \ww_1 \in \cC',
\]
so the segment between $\vv_{12}$ and $\vv_{13}$ passes through $\cC'$, and thus it cannot be the case that a single hyperplane separates both $\vv_{12}$ and $\vv_{13}$ from $\cC'$.  Similarly, the sums
\[
\vv_{12} + \vv_{24} = 4 \ww_2, 
\quad
\vv_{13} + \vv_{34} = 4 \ww_3,
\quad \text{and} \quad
\vv_{24} + \vv_{34} = 4 \ww_4
\]
all belong to $\cC'$, so none of these pairs of points can be separated from $\cC'$ by the same hyperplane.  Finally, 
\[
\vv_{12} + \vv_{34} = \vv_{13} + \vv_{24} = 2(\ww_2 + \ww_3) \in \cC'
\]
so neither the pair $\{\vv_{12}, \vv_{34}\}$ nor the pair $\{\vv_{13},\vv_{24}\}$ can be separated from $\cC'$ by a single hyperplane.  This is a contradiction.  
\end{proof}

%%%%%%%%%%%%%%%%%%%%%%%%%%%%%%%%%%%%%%%%%%%%%%%%%%%%%%%%%%%%%%%%%%%%%%%%%
\section{Proofs of Proposition~\ref{prop:proj} and Theorem~\ref{thm:SNF}}%%%%%%%%%%%%%%%%%%%%%%%%%%%%%%%%%%%%%%%%%%%%%%
\label{sec:aux}%%%%%%%%%%%%%%%%%%%%%%%%%%%%%%%%%%%%%%%%%%%%%%%%%%%%%%%%%%
%%%%%%%%%%%%%%%%%%%%%%%%%%%%%%%%%%%%%%%%%%%%%%%%%%%%%%%%%%%%%%%%%%%%%%%%%

Here we prove Proposition~\ref{prop:proj} and Theorem~\ref{thm:SNF}. 

\begin{proof}[Proof of Proposition~\ref{prop:proj}]
This is a somewhat technical proof that gets the desired outcome over multiple steps. The key idea is that Smith Normal Form is a useful tool for transforming $\RR^m$ in a way that respects the integer lattice, e.g., by transforming a non-full-dimensional cone into a full-dimensional cone in lower ambient dimension.

Suppose $\nsg$ is $k$-normalescent, so there is a $k$-dimensional cone $\cC\subseteq\RR^m$ (for some $m\ge k$) and a projection $\pi:\RR^m \rightarrow \RR$ such that $\nsg = \pi(\cC \cap \ZZ^m)$. Suppose that $\cC$ has $\ell$~extreme rays, and let $M \in \ZZ^{m \times l}$ be the matrix whose columns are the extreme rays of $\cC$. This means that any point in $\cC$ can be written as $M\xx$ with $\xx\in\RR_{\ge 0}^\ell$ (since $M\xx$ is a nonnegative real combination of the columns of $M$). Identify $\pi$ with its corresponding $1 \times m$ row vector, so that $\pi(\vec y)=\pi\cdot\vec y$ when we think of $\vec y$ as a column vector.

Our goal is to replace $\cC$ by a full-dimensional cone in $\RR^k$ and $\pi$ by the projection $\RR^k\rightarrow \RR$ onto (a multiple of) the first coordinate. Using the Smith Normal Form $M = UDV$, we will apply a series of modifications to achieve the intended goal. 
\smallskip

\noindent\textbf{Step 1:} We first absorb the unimodular matrix $U$ into the projection. To do this, observe that 
\[ \nsg = \left\{ \pi M \xx: \, \xx \in \RR_{\ge 0}^\ell,\, M \xx \in \ZZ^m \right\} = \left\{ \pi UDV \xx: \, \xx \in \RR_{\ge 0}^\ell,\, UDV \xx \in \ZZ^m \right\}.\]
Let $\pi_1 = \pi U \in \ZZ^{1 \times m}$ and $M_1 = DV$.  Since the unimodular matrix $U$ represents a bijection from $\ZZ^m$ to itself, we then have
\[ \nsg =  \left\{ \pi_1 M_1 \xx: \, \xx \in \RR_{\ge 0}^\ell,\, UM_1 \xx \in \ZZ^m \right\}=\left\{ \pi_1 M_1 \xx: \, \xx \in \RR_{\ge 0}^\ell,\, M_1 \xx \in \ZZ^m \right\}.\]
% \medskip

\noindent\textbf{Step 2:} Since $\cC$ is $k$-dimensional, $M_1$ is of rank $k$. We next replace the matrix $M_1$ with a $k\times \ell$ matrix of full rank, thus making the cone full-dimensional. Since $M_1=DV$ has rank $k$, the last $m-k$ rows of $D$ are zero. Let $D_2$ be the matrix consisting of the first $k$ rows of $D$ and $M_2 = D_2V \in \ZZ^{k \times \ell}$, which is also of rank $k$ because $V$ is invertible. Let $\pi_2$ be the $1 \times k$ row vector consisting of the first $k$ entries of $\pi_1$. Then    
\[
M_1 \xx 
= \left[ \begin{array}{c}  D_2 \\ \hline 0 \end{array} \right] V \xx
= \left[ \begin{array}{c}  D_2V \xx \\ \hline 0 \end{array} \right]
= \left[ \begin{array}{c}  M_2 \xx \\ \hline 0 \end{array} \right]
\]
so $M_1 \xx \in \ZZ^m$ if and only if $M_2 \xx \in \ZZ^k$. Furthermore,
\[
\pi_1 M_1 \xx
= \pi_1 \left[ \begin{array}{c}  M_2 \xx \\ \hline 0 \end{array} \right]
= \pi_2 M_2 \xx,
\]
and so
\[
\nsg =  \left\{ \pi_2 M_2 \xx: \, \xx \in \RR_{\ge 0}^\ell,\, M_2 \xx \in \ZZ^k \right\}.
\]
% \medskip

\noindent\textbf{Step 3:}
We now want to change our projection to be projection onto a multiple of the first coordinate. Let $d=\gcd(\nsg)$, and let $\cC_2=\cone(M_2)$. Every element of $\nsg$ will be a multiple of $\gcd(\pi_2)$, so certainly $\gcd(\pi_2)$ divides $d$. We want to show that $\gcd(\pi_2)=d$.

Since $\cC_2$ is full-dimensional, there exists a rational point $y''$ in the (topological) interior of $\cC_2$. Scaling by the common denominator of the coordinates of $y''$, we obtain an integer point $y' \in \interior(\cC_2)$. The distance $\delta$ from $y'$ to the boundary of $\cC_2$ is strictly positive; let $y$ be an integer point obtained by scaling $y'$ by any integer $N > 1/\delta$.  The distance from $y$ to the boundary of $\cC_2$ equals $N \delta > 1$, so in particular, $\vec y + \vec e_i \in \cC_2$ for each $i = 1, \ldots,k$.  
As such, 
\[
(\pi_2)_i = \big(\pi_2(\vec y + \vec e_i) - \pi_2(\vec y)\big)
\]
is a difference of two integers in $\nsg$, and therefore $d$ divides $(\pi_2)_i$, for all $i$. 

Therefore $\gcd(\pi_2)=d$.  Similarly to~\eqref{eqn:transform} in Example~\ref{ex:reproof}, the Smith Normal Form for $\pi_2$ is $(d)$, and so there exists a unimodular matrix~$W$ such that
\[\pi_2W=[1]\cdot\pi_2 \cdot W = [d \, 0 \, \cdots \, 0].\] 

Let $\pi_3=\pi_2W=[d \, 0 \, \cdots \, 0]$ and $M_3=W^{-1}M_2$. Then
\begin{align*}
\nsg&=\left\{ \pi_2 M_2 \xx: \, \xx \in \RR_{\ge 0}^\ell,\, M_2 \xx \in \ZZ^k \right\}\\
&=\left\{ \pi_2WW^{-1}M_2 \xx: \, \xx \in \RR_{\ge 0}^\ell,\, M_2 \xx \in \ZZ^k \right\}\\
&=\left\{ \pi_3 M_3 \xx: \, \xx \in \RR_{\ge 0}^\ell,\, M_3 \xx \in \ZZ^k \right\},
\end{align*}
using that $W^{-1}$ is a bijection of the integer lattice. Note that $\pi_3$ is the desired projection onto $d$ times the first coordinate.
\medskip

This completes the proof of Proposition~\ref{prop:proj} when $\nsg$ is $k$-normalescent.  Now we turn to ray-normalescence. Notice that the $\nsg$ that we have been examining is actually $\ell$-ray-normalescent, for some $\ell\ge k$. If $\ell>k$, we actually want to create an $\ell$-dimensional cone in $\RR^\ell$ projecting to $\nsg$, so we need to increase the ambient dimension from $k$ to $\ell$, while also increasing the dimension of the cone from $k$ to $\ell$ at the same time. We insert the following between Steps 2 and 3:
\medskip

\noindent\textbf{Step 2.5:}
Recall that our matrix is $M_2=D_2V$, where $D_2$ is a $k\times \ell$ diagonal matrix of full rank and $V$ is an $\ell\times\ell$ unimodular matrix. We want to ``lift'' each extreme ray in $\cone(M_2)$ (these are the columns of $M_2$) up into an $\ell$-dimensional space, by adding $\ell-k$ new rows to $M_2$ to make a new matrix $M'_2$, and we want to do it in such a way that $\cone(M'_2)$ is $\ell$-dimensional; our new projection $\pi'_2$ will simply forget about these last $\ell-k$ rows.

In particular, let $(d_1, \dots, d_k)$ be the diagonal entries of $D_2$, which are all nonzero. Let $t$ be the least common multiple of the nonzero maximal minors of $M_2$; there is at least one nonzero maximal minor, since $M_2$ is of full rank. Let $D'_2$ be the $\ell \times \ell$ diagonal matrix whose diagonal entries are $(d_1, \dots, d_k, t, \dots, t)$. Now $D'_2$ and $V$ are both $\ell \times \ell$ matrices of full rank, so the matrix $M'_2 = D'_2 V$ is as well, and thus $\cone(M'_2)$ is  a full-dimensional cone in $\RR^\ell$. 
Let $\pi'_2$ be the $1 \times \ell$ matrix obtained by appending $\ell-k$ zeros to $\pi_2$. 

We want to show that 
\[\left\{ \pi'_2 M'_2 \xx: \, \xx \in \RR_{\ge 0}^\ell,\, M'_2 \xx \in \ZZ^\ell \right\}=\left\{ \pi_2 M_2 \xx: \, \xx \in \RR_{\ge 0}^\ell,\, M_2 \xx \in \ZZ^k \right\}.\]
The forward inclusion, $\subseteq$, is clear: $M_2$ is the first $k$ rows of $M'_2$, so $M'_2\xx\in\Z^\ell$ implies $M_2\xx\in\Z^k$; and $\pi_2M_2=\pi'_2M'_2$, so $\pi_2M_2\xx=\pi'_2M'_2\xx$ (that is, an integer point in $\cone(M'_2)$ projects to an integer point in $\cone(M_2)$ when we simply forget about the last $\ell - k$ coordinates, so it will ultimately project to a point in $\nsg$).
% Then
%\[
%\pi'_2 M'_2 \xx
%= \begin{bmatrix}  \pi_2 & 0 \end{bmatrix} 
%\left[ \begin{array}{c@{}c@{}c}  & D_2 &  \\ \hline 0 & \vline & tI \end{array} \right]V\xx
%= \pi_2 D_2 V \xx
%= \pi_2 M_2 \xx.
%\]

For the reverse inclusion, let $\xx \in \RR^\ell_{\geq 0}$ be such that $M_2 \xx \in \ZZ^k$. If $M'_2\xx\in\ZZ^\ell$, we would be done, because $\pi'_2M'_2\xx = \pi_2M_2\xx$, but this need not be the case. Instead,  we must find a $\yy\in\RR^\ell_{\geq 0}$ such that
\[M'_2\yy\in\ZZ^\ell \quad\text{and}\quad M_2\yy=M_2\xx\]
(the second equation says that $M_2\yy$ and $M_2\xx$ are two different ways of writing the same point as a nonnegative linear combination of the extreme rays of $\cone(M_2)$), which will imply that 
\[\pi'_2M'_2\yy= \pi_2M_2\yy=\pi_2M_2\xx=\pi'_2M'_2\xx\]
and complete the reverse inclusion.

To find such a $\yy$, we apply Carath\'eodory's theorem \cite{DGK} (see \cite[Corollary 7.1a]{Schrijver} for the exact form we are using): since $M_2\xx\in\cone(M_2)$, there exist $k$ linearly independent columns of $M_2$ such that $M_2\xx$ is in the cone they generate; that is, there exist a $k\times k$ nonsingular submatrix $Q$ of $M_2$ and $\zz\in\RR^k_{\geq 0}$ such that $Q\zz=M_2\xx$. Let $\yy\in\RR^\ell_{\geq 0}$ be  identical to $\zz$ on the entries corresponding to the columns of $M_2$ that comprise $Q$ and 0 on the remaining entries, so that $M_2\yy=Q\zz=M_2\xx.$ Then
 all that remains to prove is that $M'_2\yy\in\ZZ^\ell$.

Indeed, we know $Q\zz=M_2\xx\in\ZZ^k$, and multiplying on the left by the adjugate matrix $\adj(Q)=\det(Q)Q^{-1}$ (which has integer entries), we get that $(\det Q) \zz \in \ZZ^k$. Therefore $t \zz \in \ZZ^k$ because $\det(Q)$ is one of the nonzero maximal minors whose lcm is $t$. Furthermore, $t \yy \in \ZZ^k$ since the entries of $\yy$ that are not also entries of $\zz$ are just zeros, and in fact $t (\vv \cdot \yy) = \vv \cdot (t \yy) \in \ZZ$ for any integer vector $\vv$. In particular, we conclude that
\[
M'_2 \yy
= \left[ \begin{array}{c@{}c@{}c}  & D_2 &  \\ \hline 0 & \vline & tI \end{array} \right] V \yy
= \begin{bmatrix} M_2 \yy \\ t \vv_{k+1} \cdot \yy \\ \vdots \\ t \vv_{\ell} \cdot \yy \end{bmatrix} \in \ZZ^\ell,\]
where $\vv_1, \dots, \vv_\ell$ are the rows of $V$.
\end{proof}

\begin{proof}[Proof of Theorem~\ref{thm:SNF}]
We will transform $M$ into Smith Normal Form, but we want to do it carefully by finding unimodular matrices $U$ and $V$ of a particular form so that $UMV=D$ (recall $U$ and $V$ are not unique). We will also use the letters $U$ and $V$ for the intermediate matrices as we compute the SNF:\ that is, at the beginning we have $U=V=I$, and $UMV=M$, and at the end we will have $UMV=D$, where $D$ has diagonal $(1,d,\ldots,d)$.

As in Example~\ref{ex:reproof}, Equation~\ref{eqn:transform}, we first let $V$ be the unimodular matrix such that
\[[a_1\,\cdots\,a_k]\cdot V=[1\,0\,\cdots\,0],\]
that is, the first row of $MV$ is  $[1\,0\,\cdots\,0]$. Noting that $U$ corresponds to elementary row operations, we can now subtract multiples of the first row from the other rows so that the first column is  $[1\,0\,\cdots\,0]^T$, that is
\[
UMV = \begin{bmatrix} 1 & \vec 0 \\ \vec 0 & M' \end{bmatrix}
\]
in block form, where $M'$ is a $(k-1)\times (k-1)$ matrix. Since these row operations did not alter the first row, the first row of $U$ is  $[1\,0\,\cdots\,0]$. Now put $M'$ in SNF using elementary row and column operations, and we will end with $D=UMV$ and the first row of $U$ is still $ [1\,0\,\cdots\,0]$. Note that $D$ is indeed the diagonal matrix with diagonal $(1,d,\ldots,d)$ by the uniqueness of the SNF diagonal. The first row of $UM$ will be $[a_1\,\cdots\,a_k]$, the first row of $M$. Since $V^{-1}=D^{-1}UM$ and $D^{-1}, U$ have first row $[1\,0\,\cdots\,0]$, the first row of $V^{-1}$ will also be $[a_1\,\cdots\,a_k]$.

In summary, we have found a unimodular matrix $V^{-1}$ whose first row is $[a_1\,\cdots\,a_k]$, so $\sop(V^{-1})=\langle a_1,\ldots,a_k\rangle$ (see Example~\ref{ex:unimodular}). Then by Lemma~\ref{lem:timesD},
\[ \sop(DV^{-1})=\frac{\langle a_1,\ldots,a_k\rangle}{d}.\]
Since $U^{-1}$ is a unimodular matrix corresponding to row operations that don't alter the first row, $M=U^{-1}DV^{-1}$ also has
\[\sop(M)=\frac{\langle a_1,\ldots,a_k\rangle}{d},\]
as desired.
\end{proof}

%%%%%%%%%%%%%%%%%%%%%%%%%%%%%%%%%%%%%%%%%%%%%%%%%%%%%%%%%%%%%%%%%%%%%%%%%
\section{Main Proof}%%%%%%%%%%%%%%%%%%%%%%%%%%%%%%%%%%%%%%%%%%%%%%%%%%%%%
\label{sec:converse}%%%%%%%%%%%%%%%%%%%%%%%%%%%%%%%%%%%%%%%%%%%%%%%%%%%%%
%%%%%%%%%%%%%%%%%%%%%%%%%%%%%%%%%%%%%%%%%%%%%%%%%%%%%%%%%%%%%%%%%%%%%%%%%

We now fill in the remaining holes from the outline in Section~\ref{sec:outline} to give a complete proof of Theorem \ref{thm:main}. We already proved in Proposition~\ref{prop:easy} that all $k$-quotients are $k$-ray-normalescent, so it remains to prove the converse.

Suppose $\nsg$ is $k$-ray-normalescent, that is, there exists a full rank matrix $M\in\ZZ^{k\times k}$ such that $\nsg=\sop(M)$. We want to prove that $\nsg$ is a $k$-quotient. Using Theorem~\ref{thm:SNF} and Lemma~\ref{lem:adj}, it suffices to achieve the following.  

\begin{goal}
\label{goal}
To show that $\nsg$ is a $k$-quotient, it suffices to find a nonsingular matrix $B \in \ZZ^{k \times k}$ with the following properties:
\begin{enumerate}[(a)]
\item 
$B$ has SNF $(1,\ldots,1,d)$,

\item 
$A=\adj(B)$ has first row relatively prime,

\item $\sop(A)=\nsg$.

\end{enumerate}
\end{goal}

\begin{prop} \label{prop:twoproperties}
Let $B \in \ZZ^{k \times k}$ be a nonsingular matrix and let $B'$ denote $B$ with the first column removed. Then $B$ satisfies properties (a) and (b) of Goal~\ref{goal} if and only if the columns of $B'$ form a \emph{primitive set}; that is, if they form a basis for the lattice of integer points contained in their real linear span. 
\end{prop}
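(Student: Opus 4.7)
My plan is to translate both conditions (a) and (b) into explicit statements about gcds of certain $(k-1)\times(k-1)$ minors of $B$, and then observe that one implies the other.

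First I would recall the entries of the adjugate: by definition $\adj(B)_{ij} = (-1)^{i+j} M_{ji}$, where $M_{ji}$ is the determinant of the matrix obtained from $B$ by deleting row $j$ and column $i$. So the first row of $\adj(B)$ is, up to sign, the list of $(k-1)\times(k-1)$ minors of $B'$ (each obtained by deleting some row $j$ and the first column of $B$). Hence condition (b) says precisely that
\[
\gcd \bigl\{\text{$(k-1)\times(k-1)$ minors of } B'\bigr\} = 1.
\]

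Next I would use the Smith Normal Form characterization (from Definition~\ref{def:snf}): for any integer matrix, the product $d_1 \cdots d_i$ of the first $i$ invariant factors equals the gcd of all $i\times i$ minors. Applied to $B'$, which is $k\times(k-1)$, this tells us that the columns of $B'$ form a primitive set (i.e. the SNF of $B'$ is $(1,1,\ldots,1)$) if and only if the gcd of its $(k-1)\times(k-1)$ minors is $1$. Combining with the previous paragraph, condition (b) is equivalent to primitivity of the columns of $B'$.

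It remains to show that, given primitivity of the columns of $B'$, condition (a) on $B$ holds automatically; and conversely if both (a) and (b) hold, we already have primitivity from (b). Applied to the full $k\times k$ matrix $B$, condition (a) says $d_1 \cdots d_{k-1} = 1$, i.e.\ the gcd over \emph{all} $(k-1)\times(k-1)$ minors of $B$ equals $1$. But the minors appearing in the first row of $\adj(B)$ form a subset of those minors, so whenever the gcd of that subset is $1$, the gcd over the larger set is also $1$. Therefore (b) implies (a), and so the combined condition ``(a) and (b)'' is equivalent to (b) alone, which by the discussion above is equivalent to primitivity of the columns of $B'$.

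There is no real obstacle here: the argument is essentially a bookkeeping exercise matching the first row of $\adj(B)$ to the maximal minors of $B'$, then invoking the minor characterization of the SNF. The only thing to be careful about is noticing that (a) is the \emph{weaker} of the two conditions (being a gcd over a larger collection of minors), so it comes for free once (b) is established.
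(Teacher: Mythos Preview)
Your proof is correct and follows essentially the same route as the paper's: identify the first-row entries of $\adj(B)$ with the maximal minors of $B'$, observe that (b) is therefore a gcd condition on a subset of the $(k-1)\times(k-1)$ minors of $B$ so that (b) implies (a), and equate primitivity of the columns of $B'$ with the gcd of its maximal minors being $1$. The only cosmetic difference is that for this last equivalence the paper cites an external reference, whereas you derive it directly from the minor characterization of the Smith Normal Form already recorded in Definition~\ref{def:snf}; your version is thus slightly more self-contained.
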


\begin{proof} For $1\le i,j\le k$, let $B_{ij}$ denote the minor obtained by removing the $i$th row and $j$th column of $B$.
Property (a) above is equivalent to $d_{k-1}$ (the $(k-1)$-st diagonal element of the SNF) being 1, because $d_i$ divides $d_{i+1}$, for all $i$, and so $d_{k-1}=1$ forces $d_j=1$ for all $j\le k-1$. This is equivalent to the $(k-1)\times (k-1)$ minors of $B$ being relatively prime, i.e.,
\[\gcd\left(B_{ij}:\ 1\le i,j\le k\right)=1.\]

For Property (b), notice that the $i$th entry of the first row of $A$ is $(-1)^{i+1}B_{i1}$, using the standard definition of the adjugate matrix. Therefore, Property (b) is equivalent~to
\[\gcd\left(B_{i1}:\ 1\le i\le k\right)=1.\]

In other words, Property (b) subsumes Property (a), and we are simply looking for $B$ such that $\gcd\left(B_{i1}:\ 1\le i\le k\right)=1.$ Notice that all of these minors remove the first column of $B$, so they are the maximal minors of $B'$.  That is, we need to find $B$ such that the maximal minors of $B'$ are relatively prime. By~\cite[\S 1.3]{lekkerkerker}, this is equivalent to the columns of $B'$ forming a primitive set.
\end{proof}

Now, it is already known~\cite{EW} that the columns of a ``random'' integer matrix $B'$ with more rows than columns will indeed form a primitive set with positive probability.  The precise result is as follows.  

\begin{thm}[{\cite[Theorem 1]{EW}}]\label{thm:primitiveprobability}
Fix $k' < k \in \ZZ_+$. For $q \in \ZZ_+$, $1 \leq i \leq k'$, and $1 \leq j \leq k$, let $b_{q,i,j} \in \ZZ$.  For a given $q$, choose integers $s_{ij}$ uniformly and independently from the set $b_{q,i,j} \leq s_{ij} \leq b_{q,i,j}+q$. Let $\vec s_i = (s_{i1}, \dots, s_{ik})$ and let $S = \{\vec s_1, \vec s_2, \dots, \vec s_k'\}$.
If each $b_{q,i,j}$ is bounded by a polynomial in $q$, then as $q \to \infty$, the probability that $S$ is a primitive set approaches
\[ \frac{1}{\zeta(k)\zeta(k-1)\dots\zeta(d-k+1)} \]
where $\zeta$ is the Riemann zeta function. 
\end{thm}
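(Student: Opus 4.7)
The plan is to prove Theorem~\ref{thm:primitiveprobability} via a sieve over all primes, the standard strategy for computing asymptotic densities of primitive lattice sets. The set $S$ fails to be primitive exactly when some prime $p$ divides every $k' \times k'$ minor of the $k' \times k$ matrix whose rows are the $\vec s_i$, equivalently when $\vec s_1,\ldots,\vec s_{k'}$ are linearly dependent in $\mathbb{F}_p^k$. Writing $E_p$ for this event, M\"obius inversion over squarefree integers gives
\[
\Pr[S \text{ primitive}] = \sum_{\substack{n \geq 1 \\ n \text{ squarefree}}}\mu(n)\,\Pr\!\Bigl[\,\bigcap_{p \mid n}E_p\Bigr].
\]

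Next I would compute each $\Pr[E_p]$ in the limit $q \to \infty$. Since $s_{ij}$ is uniform on an interval of length $q+1$ whose centre is at most polynomial in $q$, its distribution modulo any fixed $p$ is uniform on $\mathbb{F}_p$ up to total-variation error $O(p/q)$, and the same holds jointly by independence. A standard count shows that uniform random vectors in $\mathbb{F}_p^k$ are linearly independent with probability $\prod_{i=0}^{k'-1}(1 - p^{i-k})$, so
\[
\Pr[E_p] \longrightarrow \rho_p := 1 - \prod_{j=k-k'+1}^{k}\!\bigl(1-p^{-j}\bigr).
\]
For squarefree $n$, the Chinese Remainder Theorem factors $\mathbb{Z}/n\mathbb{Z}$ into the product of $\mathbb{F}_p$ for $p \mid n$, so $\Pr[\bigcap_{p \mid n}E_p] \to \prod_{p \mid n}\rho_p$ at rate polynomial in $n/q$. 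Plugging these limits in formally and invoking Euler's product yields
\[
\sum_{n}\mu(n)\prod_{p \mid n}\rho_p = \prod_p(1-\rho_p) = \prod_{j=k-k'+1}^{k}\,\prod_p\bigl(1-p^{-j}\bigr) = \prod_{j=k-k'+1}^{k}\frac{1}{\zeta(j)},
\]
which is the advertised value.

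The main obstacle is justifying the exchange of the $q \to \infty$ limit with the infinite M\"obius sum, since the random $k' \times k'$ minors have magnitude $O(q^{k'})$ up to polynomial factors from the centres, so very large primes can still divide every minor with non-trivial probability and a naive term-by-term limit is not valid. The remedy I would pursue is a uniform bound of the form
\[
\Pr\!\Bigl[\,\bigcap_{p\mid n}E_p\Bigr] \;\leq\; \frac{C^{\omega(n)}}{n^{k-k'+1}}
\]
valid for all $q$, obtained by noting that forcing $k'$ near-uniform vectors in $\mathbb{F}_p^k$ into a common proper subspace costs at least $O(p^{-(k-k'+1)})$ per prime and then multiplying these bounds via CRT. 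Since $k - k' + 1 \geq 2$, the associated Dirichlet series converges absolutely, permitting a dominated-convergence argument: truncate the M\"obius sum at a threshold $N$, let $q \to \infty$ with $N$ fixed to evaluate the head exactly, then let $N \to \infty$ to kill the tail. This is essentially the approach of \cite{EW}, which I would follow, possibly streamlining the tail estimate using the specific box geometry of the hypothesis.
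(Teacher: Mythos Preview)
The paper does not prove this theorem at all: it is quoted verbatim as \cite[Theorem~1]{EW} and used as a black box in Section~\ref{sec:converse}. There is therefore no ``paper's own proof'' to compare against.

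Your sketch is a faithful outline of the argument in \cite{EW}: the sieve/M\"obius setup, the computation of the local probabilities $\rho_p$ via counting independent tuples in $\mathbb{F}_p^k$, the CRT factorisation, and the tail bound justifying the limit exchange are exactly the ingredients there. You even note this yourself. If the goal were to supply a proof where the paper gives none, what you have written is the correct plan; just be aware that the uniform tail estimate (your displayed bound with $C^{\omega(n)}/n^{k-k'+1}$) is where the real work lies, and \cite{EW} handles it with some care rather than a one-line CRT multiplication, since for primes $p$ comparable to or larger than $q$ the residues are far from uniform and the ``cost per prime'' heuristic needs a separate argument.
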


Thus, Goal~\ref{goal} reduces to finding large regions of $\RR^{k \times k}$ in which the integer matrices $B$ all have Property (c), that is, $\sop(\adj(B)) = \nsg$ (see Lemma~\ref{prop:goodcube}). In such large regions, we will surely be able to find a matrix satisfying (b) (and hence (a)), using Theorem~\ref{thm:primitiveprobability}. This will give us a matrix satisfying Properties (a), (b), and (c) of Goal~\ref{goal}, and we will have found our $k$-quotient.

Recall that $\nsg=\sop(M)$, let $\vec v_1,\ldots,\vec v_k$ be the columns of $M$, and let 
\[\cC=\cone(M)=\cone(\vec v_1,\ldots,\vec v_k).\]
Let $\vec t = \vec v_1 + \cdots + \vec v_k$, which is an integer vector in the (topological) interior of $\cC$. For each positive integer $r$, let $M_r$ be the matrix whose columns are the integer vectors $r \vec v_1 - \vec t, \dots, r \vec v_k - \vec t$, and let $\cC_r=\cone(M_r)$. We will show in Lemma~\ref{lem:semigroupstability2} that for sufficiently large $r$, slight perturbations of $M_r$ still have $\sop(M_r)=\sop(M)$. It is convenient to measure perturbations coordinate-wise, so we will use the element-wise $\ell_\infty$-norm on matrices, that is,
\[\norm{(m_{ij})}=\max_{i,j}\abs{m_{ij}}.\]

\begin{lemma} \label{lem:conechain}
For every integer $r > k$, 
\begin{enumerate}[(a)]
\item 
the extreme rays of $\cC$ are contained in the interior of $\cC_r$, and  

\item 
the extreme rays of $\cC_{r+1}$ are contained in the interior of $\cC_r$.
\end{enumerate}
\end{lemma}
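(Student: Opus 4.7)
The plan is to deduce both parts from explicit strictly positive linear representations, using that $\cC_r$ is simplicial so that its interior consists precisely of the strictly positive linear combinations of its generators $r\vec v_1 - \vec t, \ldots, r\vec v_k - \vec t$.

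First I would verify that $\cC_r$ is full-dimensional and simplicial whenever $r > k$. Writing $\vec t = \sum_i \vec v_i$, any dependence $\sum_i \alpha_i(r\vec v_i - \vec t) = 0$ becomes $\sum_j \bigl(r\alpha_j - \sum_i \alpha_i\bigr)\vec v_j = 0$; since $M$ has full rank, linear independence of $\vec v_1,\ldots,\vec v_k$ forces $r\alpha_j = \sum_i \alpha_i$ for every $j$, so the $\alpha_j$ are all equal, and then $(r-k)\alpha_j = 0$ forces them to vanish. Hence the generators of $\cC_r$ form a basis of $\RR^k$, and $\interior(\cC_r)$ is exactly the set of strictly positive combinations of these generators.

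For part (a), to place $\vec v_m$ in $\interior(\cC_r)$ I would solve for coefficients in $\vec v_m = \sum_i \lambda_i(r\vec v_i - \vec t)$. Expanding as above and matching coefficients of each $\vec v_j$ yields the linear system $r\lambda_j - \sum_i \lambda_i = \delta_{jm}$. Setting $S = \sum_i \lambda_i$ and summing over $j$ gives $rS = kS + 1$, so $S = 1/(r-k)$; back-substitution yields $\lambda_j = 1/(r(r-k))$ for $j \neq m$ and $\lambda_m = (r-k+1)/(r(r-k))$. All of these are strictly positive exactly when $r > k$, which is the stated hypothesis.

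For part (b), I would exploit the identity $(r+1)\vec v_m - \vec t = (r\vec v_m - \vec t) + \vec v_m$. The first summand is the $m$-th extreme generator of $\cC_r$ and hence lies in $\cC_r$; the second lies in $\interior(\cC_r)$ by part (a). Since $\cC_r$ is a convex cone, the sum of any point in $\cC_r$ with any point in $\interior(\cC_r)$ again lies in $\interior(\cC_r)$: if $B(\vec v_m,\epsilon) \subseteq \cC_r$ is an open ball, then $(r\vec v_m - \vec t) + B(\vec v_m,\epsilon) \subseteq \cC_r + \cC_r = \cC_r$ is an open ball around $(r+1)\vec v_m - \vec t$, proving the claim. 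I do not anticipate any serious obstacle; the only care required is tracking the strict inequality $r > k$ through the explicit formulas above.
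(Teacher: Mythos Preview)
Your proof is correct. For part~(b) you use exactly the same identity $(r+1)\vec v_m - \vec t = (r\vec v_m - \vec t) + \vec v_m$ as the paper. For part~(a) the paper takes a slightly slicker two-step route: it first observes that $\sum_{i}(r\vec v_i - \vec t) = (r-k)\vec t$ places $\vec t$ in $\interior(\cC_r)$, and then writes $r\vec v_m = (r\vec v_m - \vec t) + \vec t$ to get $\vec v_m \in \interior(\cC_r)$ without ever solving for the coefficients. Your direct computation of the $\lambda_j$ is a bit longer but has the advantage of being fully explicit, and your preliminary verification that the generators of $\cC_r$ are linearly independent (hence that $\interior(\cC_r)$ really is the set of strictly positive combinations) fills in a point the paper leaves implicit.
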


\begin{proof}
First, since $\sum_{i=1}^k \left( r \vec v_i - \vec t \right) = (r-k) \vec t$ belongs to the interior of $\cC_r$ for all $r > k$, we have $\vec t \in \interior(\cC_r)$ and thus $\vec v_i = (\vec v_i - \vec t) + \vec t \in \interior(\cC_r)$ for each $i = 1, \dots, k$, 

Furthermore, for each $i$ we have $(r+1) \vec v_i - t = \left( r\vec v_i - t \right) + \vec v_i$. Since the first term is a generator of $\cC_r$ and the second belongs to $\cC \setminus \{0\}$ which, by the first statement, is contained in the interior of $\cC_r$, we conclude that $(r+1) \vec v_i - t \in \interior(\cC_r)$.
\end{proof}

\begin{lemma} \label{lem:coneperturbation}
Let $r > k$ and let $\vec w_i = (r+1) \vec v_i - \vec t$, the extreme rays of $\cC_{r+1}$. For all sufficiently small $\epsilon > 0$, if $\cC'$ is a cone generated by (real) vectors $\vec w'_1, \dots, \vec w'_k$ such that $\lVert \vec w'_i - \vec w_i \rVert < \epsilon$ for each $i$, then we have $\cC \subseteq \cC' \subseteq \cC_r.$
\end{lemma}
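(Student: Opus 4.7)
My plan is to prove the two inclusions $\cC \subseteq \cC'$ and $\cC' \subseteq \cC_r$ separately, each by a continuity argument leveraging the interior containments already established in Lemma~\ref{lem:conechain}. The key observation is that the cone $\cC_{r+1}$ is simplicial (its $k$ generators $\vec w_1, \ldots, \vec w_k$ are linearly independent, since $\vec v_1, \ldots, \vec v_k$ are and the linear change of variables $\vec v_i \mapsto (r+1)\vec v_i - \vec t$ is invertible for $r+1 > k-1$), so its interior can be described directly via positive coefficients.

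For the easier inclusion $\cC' \subseteq \cC_r$, I would invoke Lemma~\ref{lem:conechain}(b), which gives $\vec w_i \in \interior(\cC_r)$ for each $i$. Since the interior is open, there exists $\epsilon_1 > 0$ such that the open $\ell_\infty$-ball of radius $\epsilon_1$ around each $\vec w_i$ lies in $\interior(\cC_r)$; then for any $\epsilon \leq \epsilon_1$ and any perturbation $\vec w'_i$ within $\epsilon$ of $\vec w_i$, we have $\vec w'_i \in \cC_r$, and since $\cC_r$ is a convex cone containing the origin, $\cC' = \cone(\vec w'_1, \ldots, \vec w'_k) \subseteq \cC_r$.

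For the inclusion $\cC \subseteq \cC'$, the strategy is to show each extreme ray $\vec v_i$ of $\cC$ sits inside $\cC'$. By Lemma~\ref{lem:conechain}(a) applied with $r$ replaced by $r+1$, we have $\vec v_i \in \interior(\cC_{r+1})$, so since $\cC_{r+1}$ is simplicial there exist \emph{strictly positive} scalars $\lambda_{ij}$ with $\vec v_i = \sum_{j=1}^k \lambda_{ij} \vec w_j$. Let $W$ be the $k \times k$ matrix with columns $\vec w_1, \ldots, \vec w_k$; then $W$ is invertible and $\lambda_i = W^{-1} \vec v_i$ has all positive entries. Since matrix inversion is continuous on the open set of invertible matrices, there exists $\epsilon_2 > 0$ such that whenever $\|W' - W\| < \epsilon_2$, the matrix $W'$ (with columns $\vec w'_j$) remains invertible and $(W')^{-1} \vec v_i$ still has all positive entries for every $i$. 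This gives $\vec v_i \in \cC'$, whence $\cC \subseteq \cC'$.

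The proof then concludes by setting $\epsilon = \min(\epsilon_1, \epsilon_2)$. I expect no real obstacle here: the statement is essentially a routine open-condition argument, and the substantive geometric work was already done in Lemma~\ref{lem:conechain}. The only subtle point is to verify that the coefficient vector $W^{-1}\vec v_i$ genuinely has strictly positive entries (as opposed to merely nonnegative), which is exactly what the interior containment in Lemma~\ref{lem:conechain}(a) provides in the simplicial case.
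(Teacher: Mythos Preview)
Your proof is correct and follows essentially the same approach as the paper: both inclusions are handled via Lemma~\ref{lem:conechain} and a continuity argument based on the strictly positive coefficients expressing each $\vec v_i$ in the simplicial cone $\cC_{r+1}$. The only cosmetic difference is that you invoke continuity of matrix inversion to show the coefficients $(W')^{-1}\vec v_i$ remain positive, whereas the paper fixes the original coefficients $\mu_{ij}$, forms $\vec v'_i = \sum_j \mu_{ij}\vec w'_j \in \cC'$, and bounds $\lVert \vec v'_i - \vec v_i\rVert$ explicitly; these are equivalent packagings of the same open-condition argument.
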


\begin{proof}
By Lemma~\ref{lem:conechain}, for each $i=1,\dots,k$ there exists $\epsilon_i$ such that the ball $B_{\epsilon_i}(\vec w_i)$ is contained in $\cC_r$. So the containment $\cC' \subseteq \cC_r$ will hold whenever $\epsilon < \min\{\epsilon_1, \dots, \epsilon_k\}$.

For the other containment, again by Lemma \ref{lem:conechain} we have $\cC \subseteq \interior(\cC_{r+1})$ So let $\eta_i = \dist(\vec v_i, \partial \cC_{r+1}) > 0$ for each $i$. For the same reason, for each $i$ we can write $\vec v_i = \sum_{j=1}^k \mu_{ij} \vec w_j$ where each coefficient $\mu_{ij}$ is strictly positive. Let $\delta_i = \sum_{j=1}^k \mu_{ij}$. Let
\[ \vec v'_i = \sum_{j=1}^k \mu_{ij} \vec w'_j \in \interior(\cC').\]
Then
\[ \norm{ \vec v'_i - \vec v_i } = \norm{ \sum_{i=1}^k \mu_{ij} \left( \vec w'_i -
\vec w_i \right) }\leq \sum_{j=1}^k \mu_{ij} \epsilon =  \delta_i \epsilon.\]
Thus if $\epsilon < \min \{ \frac{\eta_1}{\delta_1}, \dots, \frac{\eta_k}{\delta_k} \}$, then $\vec v_i \in \cC'$ for each $i$, and thus $\cC \subseteq \cC'$.
\end{proof}

\begin{lemma} \label{lem:semigroupstability}
For all sufficiently large $r$, $\sop(M_r) = \nsg$.
\end{lemma}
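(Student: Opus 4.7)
The plan is to prove both inclusions of $s(M_r) = \nsg$. The inclusion $\nsg \subseteq s(M_r)$ is immediate from Lemma~\ref{lem:conechain}(a): each extreme ray of $\cC$ lies in $\cC_r$, so $\cC \subseteq \cC_r$, and projecting integer points gives $\nsg \subseteq s(M_r)$. For the reverse inclusion I will use the standing assumption $\gcd(\nsg)=1$, so that $\nsg$ is cofinite in $\NN$; if $F$ denotes its Frobenius number, it then suffices to show that for all sufficiently large $r$, every integer point of $\cC_r \setminus \cC$ projects to a first coordinate strictly exceeding $F$, since any such coordinate automatically lies in $\nsg$.

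The key intermediate claim is that $\bigcap_{r>k}\cC_r = \cC$. Since $\cC$ is simplicial, each $\vec x \in \RR^k$ has a unique expansion $\vec x = \sum_i \mu_i \vec v_i$; writing $\vec x = \sum_i \lambda_i(r\vec v_i - \vec t)$ with $\vec t = \sum_j \vec v_j$ and solving for the $\lambda_i$ yields $\mu_i = r\lambda_i - \sum_j \lambda_j$, from which a short computation shows that $\vec x \in \cC_r$ if and only if $(r-k)\mu_i + \sum_j \mu_j \ge 0$ for every $i$. For a fixed $\vec x$, if some $\mu_i < 0$ then letting $r \to \infty$ eventually violates this inequality, so membership in every $\cC_r$ forces $\mu_i \ge 0$ for all $i$, i.e., $\vec x \in \cC$. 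Combined with Lemma~\ref{lem:conechain}(b), the $\cC_r$ thus form a decreasing chain with intersection $\cC$.

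Now slice by the slab $\{x_1 \le F\}$: for $r$ large enough that every generator $r\vec v_i - \vec t$ has positive first coordinate, the polytopes $P_r := \cC_r \cap \{x_1 \le F\}$ are bounded, and they decrease to $P := \cC \cap \{x_1 \le F\}$. Because each $P_r \cap \ZZ^k$ is finite and these sets are nested inside the fixed finite set $P_{r_0} \cap \ZZ^k$ (for the first $r_0$ where $P_{r_0}$ is bounded), the decreasing chain of finite sets must stabilize at their common intersection $P \cap \ZZ^k$. Consequently, for $r$ past some threshold, no integer point of $\cC_r \setminus \cC$ has first coordinate in $[0,F]$; every remaining integer point of $\cC_r \setminus \cC$ therefore projects to an integer greater than $F$, which lies in $\nsg$.

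The main obstacle I anticipate is ensuring boundedness of $P_r$ for large $r$, which demands that every extreme ray $\vec v_i$ of $\cC$ have strictly positive first coordinate. If some $\vec v_i$ lay on the bounding hyperplane $\{x_1 = 0\}$, then $r\vec v_i - \vec t$ would have first coordinate $-t_1 < 0$ and $s(M_r)$ would itself contain negative integers, so the lemma would fail outright in that case. Dispatching this requires either a preliminary reduction (rechoosing the simplicial $M$ provided by Proposition~\ref{prop:proj} so that no column lies on the bounding hyperplane) or a refinement of the stabilization argument that restricts attention to integer points with nonnegative projection; once handled, the decreasing-chain argument above closes the proof.
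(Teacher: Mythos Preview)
Your argument is correct and takes a genuinely different route from the paper. For the hard containment $s(M_r)\subseteq\nsg$, the paper works gap by gap with a metric estimate: for each gap $z$ the compact slice $\{x_1=z\}\cap\cC$ has positive distance $u_z$ from $\ZZ^k$, and an explicit bound shows every $\vec y\in\cC_r\cap\{x_1=z\}$ lies within $z\norm{\vec t}/(r\min_i s_i - t')$ of $\cC$, hence is not a lattice point once $r$ is large. You instead combine the nesting $\cC_{r+1}\subseteq\cC_r$ from Lemma~\ref{lem:conechain}(b) with your identity $\bigcap_{r>k}\cC_r=\cC$ to force the decreasing finite sets $\cC_r\cap\{x_1\le F\}\cap\ZZ^k$ to stabilize at $\cC\cap\{x_1\le F\}\cap\ZZ^k$. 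Your route is cleaner and avoids any distance estimate; the paper's is in principle effective, as it yields an explicit threshold for $r$. The obstacle you flag (some $s_i=0$) is real but is shared by the paper's own proof, which silently divides by $r\min_i s_i - t'$ and thus also needs every $s_i>0$.
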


\begin{proof}
By Lemma \ref{lem:conechain}, for every $r > k$ we have $\cC \subseteq \cC_r$, so the containment $\nsg \subseteq \sop(M_r)$ is immediate.

For the opposite containment, we will consider the gaps of the semigroup $\nsg$. For each gap $z$, let $H_z$ be the hyperplane $x_1 = z$ in $\R^n$. By definition $H_z \cap \cC$ contains no lattice points. Since $\ZZ^n$ is a closed set and $H_z \cap \cC$ is compact, $u_z := \dist(H_z \cap \cC, \Z^n) > 0$. 

Let $\vec y$ be any (real) point in $\cC_r \cap H_z$. Then we can write 
\[ \vec y = \sum_{i=1}^k \lambda_i \left( r\vec v_i - \vec t \right), \; \lambda_1, \dots, \lambda_k \geq 0.\]
Let $s_i, t'$ be the first coordinates of $\vec v_i,\vec t$, respectively, and note that $z$ is the first coordinate of $\vec y$, so that
\[ z = \sum_{i=1}^k \lambda_i \left(s_i - t' \right) \geq \left(rs_1 - t'\right) \sum_{i=1}^k \lambda_i.\]
% and so $\sum_{i=1}^k \lambda_i \leq \frac{z}{rs_1 - t'}$. 
On the other hand, the point $\vec w := r \sum_{i=1}^k \lambda_i \vec v_i$ belongs to $\cC$, and we have
\[ \norm{\vec w - \vec y} = \norm{\sum_{i=1}^k \lambda_i \vec t} = \left( \sum_{i=1}^k \lambda_i \right) \norm{\vec t} \leq \frac{z \norm{\vec t}}{rs_1 - t'}.\]
The last expression tends to zero as $r$ grows. Since $\nsg$ has only finitely many gaps and again since $H_z \cap \cC$ is compact, for sufficiently large $r$ it will be the case for every gap $z$ and every $\vec y \in \cC_r \cap H_z$ that
\[ \dist(\vec y, \cC) \le \dist(\vec y, H_z \cap \cC) < u_z.\]
By the definition of $u_z$, it follows that for sufficiently large $r$, any such $\vec y$ cannot be an integer point. Thus $\sop(M_r)$ does not contain any of the gaps, so $\sop(M_r) \subseteq \nsg$.
\end{proof}

\begin{lemma} \label{lem:semigroupstability2}
For all sufficiently small $\varepsilon$ and all sufficiently large natural numbers $r$, if $A$ is a matrix such that $\lVert A - M_{r+1} \rVert < \varepsilon$, then $\sop(A) = \nsg$. 
\end{lemma}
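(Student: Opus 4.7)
The plan is a short sandwich argument combining the two lemmas already proved in this section. First I would use Lemma~\ref{lem:semigroupstability} to fix an $r_0\ge k$ such that $s(M_r)=\nsg$ for every $r\ge r_0$. For any such $r$, I would then apply Lemma~\ref{lem:coneperturbation} to the extreme rays $\vec w_i=(r+1)\vec v_i-\vec t$ of $\cC_{r+1}$. That lemma produces a threshold $\varepsilon=\varepsilon(r)>0$ with the following property:\ for any matrix $A\in\RR^{k\times k}$ with $\norm{A-M_{r+1}}<\varepsilon$, the cone $\cC'=\cone(A)$ is sandwiched as $\cC\subseteq\cC'\subseteq\cC_r$.

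With the sandwich in hand, the conclusion drops out by monotonicity. Intersecting with $\ZZ^k$ preserves both inclusions, and then projecting onto the first coordinate via $\pi$ gives
\[
\nsg=\pi(\cC\cap\ZZ^k)\subseteq \pi(\cC'\cap\ZZ^k)=s(A)\subseteq \pi(\cC_r\cap\ZZ^k)=s(M_r)=\nsg,
\]
where the first equality uses $\nsg=s(M)$ and the last uses $r\ge r_0$. Hence $s(A)=\nsg$, as desired.

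The only subtlety worth flagging is the order of quantifiers. The admissible perturbation size $\varepsilon$ extracted from Lemma~\ref{lem:coneperturbation} genuinely depends on $r$, since the geometric quantities controlling it (the distances $\eta_i$ from $\vec v_i$ to $\partial\cC_{r+1}$, the coefficients $\mu_{ij}$ expressing $\vec v_i$ in the $\vec w_j$, and so on) all depend on the defining vectors $\vec w_i=(r+1)\vec v_i-\vec t$ of $\cC_{r+1}$. Thus one must first select $r$ large enough for Lemma~\ref{lem:semigroupstability} to deliver $s(M_r)=\nsg$, and only then choose $\varepsilon$ small enough so that the sandwich conclusion of Lemma~\ref{lem:coneperturbation} holds relative to that particular $r$. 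Once the quantifiers are ordered this way, there is no real obstacle:\ the two earlier lemmas have already done the heavy lifting for cone perturbation and semigroup stability, respectively, and the proof of Lemma~\ref{lem:semigroupstability2} is essentially a one-line composition of them.
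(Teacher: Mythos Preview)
Your proposal is correct and matches the paper's proof essentially line for line: both sandwich $\cC'=\cone(A)$ between $\cC$ and $\cC_r$ via Lemma~\ref{lem:coneperturbation}, then invoke Lemma~\ref{lem:semigroupstability} and monotonicity of $s(\cdot)$ to squeeze $s(A)=\nsg$. Your explicit discussion of the quantifier order (first choose $r$ large, then $\varepsilon$ small depending on $r$) is in fact more careful than the paper's terse two-line version.
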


\begin{proof}
Let $\cC'$ be the cone generated by the columns of $A$. By Lemma \ref{lem:coneperturbation}, we have $\cC \subseteq \cC' \subseteq \cC_r$. Then by Lemma \ref{lem:semigroupstability} we have $\nsg = \sop(M) \subseteq \sop(A) \subseteq \sop(M_r) = \nsg.$ 
\end{proof}

The above yields a large region from which we may choose $A$ such that $\sop(A)=\nsg$. Looking at Goal~\ref{goal}, we want to instead choose some $B$ from its own large region, and then use $A=\adj(B)$. The following lemma allows us to do this. 

\begin{prop} \label{prop:goodcube}
There exists a matrix $B_0$ such that, for every positive integer $q$, there exists a cube $\Lambda_q \subseteq \R^{k \times k}$ centered on $qB_0$ and of diameter $q$ such that every matrix $B \in \Lambda_q \cap \Z^{k \times k}$ satisfies $\sop(\adj(B)) = \nsg$. 
\end{prop}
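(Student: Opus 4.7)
The plan is to exploit the iterated-adjugate identity $\adj(\adj(A)) = (\det A)^{k-2} A$ (valid for $k \geq 2$) in order to take $B_0$ to be a suitable multiple of $\adj(M_{r+1})$. More precisely, I would first fix $r$ large enough that Lemma~\ref{lem:semigroupstability} gives $s(M_r) = s(M_{r+1}) = \nsg$, fix an $\varepsilon > 0$ small enough for Lemma~\ref{lem:coneperturbation} to apply, and (after permuting columns of $M$ at the outset if necessary so that $\det M > 0$ and hence $\det M_{r+1} > 0$ for large $r$) define $B_0 := N\,\adj(M_{r+1})$ for a positive integer $N$ to be chosen at the end. The integrality of the adjugate ensures $B_0 \in \ZZ^{k \times k}$, and the adjugate identity gives
\[ \adj(B_0) = N^{k-1}(\det M_{r+1})^{k-2}\,M_{r+1} =: c\,M_{r+1}, \qquad c > 0. \]

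Next, for $q \in \ZZ_+$ and $B \in \Lambda_q := qB_0 + [-q/2,\, q/2]^{k \times k}$, I would write $B = q(B_0 + F)$ with $\lVert F \rVert_\infty \leq 1/2$, and use the degree-$(k-1)$ homogeneity of $\adj$ to obtain $\adj(B) = q^{k-1}\adj(B_0 + F)$, so that $\cone(\adj(B)) = \cone(\adj(B_0 + F))$. Since cones are invariant under positive scaling, a trivial rescaled form of Lemma~\ref{lem:coneperturbation} reduces the desired sandwich $\cC \subseteq \cone(\adj(B)) \subseteq \cC_r$ to the perturbation estimate $\lVert \adj(B_0 + F) - \adj(B_0) \rVert_\infty < c\,\varepsilon$ for all $F$ with $\lVert F \rVert_\infty \leq 1/2$. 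Expanding $\adj(B_0 + F)$ as a polynomial in $F$ of total degree $k-1$ and using that each entry of $B_0$ is $\Theta(N)$, the left side is $O(N^{k-2})$ (with implicit constant depending only on $k$ and $\adj(M_{r+1})$) while $c\,\varepsilon = \Theta(N^{k-1})$; thus any sufficiently large $N$ works, and fixing such an $N$ completes the construction. Lemma~\ref{lem:semigroupstability} then sandwiches $\nsg = s(M) \subseteq s(\adj(B)) \subseteq s(M_r) = \nsg$, as required.

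The main obstacle is the tension that $\Lambda_q$ has side length $q$, arbitrarily large as $q$ grows, yet one needs $\adj$ to behave essentially constantly on it. The resolution is the homogeneity trick: because the cube $\Lambda_q$ scales exactly like its center $qB_0$, dividing by $q$ collapses it to the fixed-size box $B_0 + [-1/2,\,1/2]^{k \times k}$, and only this fixed box matters for the cone of $\adj(B)$. Once the problem is reduced to this fixed box, the free parameter $N$ governs the ratio $O(N^{k-2})/\Theta(N^{k-1}) = O(1/N)$, which is precisely the slack that forces $\adj(B_0 + F)$ to lie within the small tolerance $c\varepsilon$ required for the cone sandwich to hold.
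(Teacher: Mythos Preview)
Your proposal is correct and follows essentially the same approach as the paper: both take $B_0$ to be a positive scalar multiple of $\adj(M_{r+1})$ (equivalently of $M_{r+1}^{-1}$) and exploit the homogeneity of the setup to collapse the growing cube $\Lambda_q$ to a fixed box, after which only a uniform perturbation bound is needed. The only difference is in how that bound is obtained---the paper invokes continuity of matrix inversion near $M_{r+1}^{-1}$ and then passes to $\adj(B)=\det(B)\,B^{-1}$, whereas you use the iterated-adjugate identity $\adj(\adj A)=(\det A)^{k-2}A$ together with an explicit $O(N^{k-2})$ versus $\Theta(N^{k-1})$ polynomial estimate; your version has the minor bonus of making $B_0$ integral and of handling the sign of $c$ explicitly.
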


\begin{proof}
Choose $r$ and $\varepsilon$ to satisfy Lemma~\ref{lem:semigroupstability2}, and let $M^{-1}_{r+1} = (m_{ij})$.  By the continuity of the matrix inverse away from singular matrices, there exists $\delta>0$ such that, for all $B\in \R^{k\times k}$ with $\norm{B-M_{r+1}^{-1}}<\delta$, we have that $B$ is nonsingular and $\norm{B^{-1}-M_{r+1}}<\varepsilon$. That is, since we are using the element-wise $\infty$-norm, the conclusion holds whenever $B = (b_{ij})$ such that $\abs{b_{ij}-m_{ij}} < \delta$. 

Let $B_0=\frac{1}{2\delta}M_{r+1}^{-1}$. Suppose $B\in \Z^{k\times k}$ is an integer matrix with 
$\norm{B-qB_0}< q/2$ (that is, $B\in\Lambda_q\cap\Z^{k\times k}$, as in the statement of this proposition). Let $w=q/2\delta$ and note $\left(2\delta B/q\right)^{-1}=wB^{-1}$. Then
\begin{align*}
\norm{B-qB_0}< q/2
&\Rightarrow \norm{B/w-qB_0/w}<q/2w\\ 
& \Rightarrow \norm{B/w-M^{-1}_{r+1}}<\delta\\
&\Rightarrow \norm{wB^{-1}-M_{r+1}}<\varepsilon,
\end{align*} 
and so $\sop\left(wB^{-1}\right)=S$ by Lemma~\ref{lem:semigroupstability2}. Let $A=\adj(B)=\det(B)B^{-1},$ where $\adj(B)$ is the classical adjoint. Scaling a matrix does not change the cone its columns generate,~so
\[\sop\left(A\right)=\sop\left(\frac{\det(B)}{w}wB^{-1}\right)=S,\]
which completes the proof.  
%Since $B$ is an integer matrix, so is $A=\adj(B)$.
\end{proof}

%\begin{proof} [Proof of Theorem \ref{thm:perturbation}]

%\end{proof}

We are now ready to tie everything together and prove $\nsg=\sop(M)$ is a $k$-quotient. By Proposition~\ref{prop:goodcube}, for every integer $q$, the cube $\Lambda_q$ with center $qB_0$ and diameter $q$ has the property that if $B \in \Lambda_q \cap \ZZ^{k \times k}$, then $\sop(\adj(B)) = \nsg$. That is, all $B\in \Lambda_q \cap \ZZ^{k \times k}$ satisfy Property (c) of Goal~\ref{goal}. Notice that the inequalities defining the cube $\Lambda_q$ depend linearly on $q$. Thus the hypotheses of Theorem~\ref{thm:primitiveprobability} apply to the $k \times (k-1)$ matrix $B'$ obtained by removing the first column of $B$. Therefore, for sufficiently large~$q$, the probability that the columns of $B'$ form a primitive set must be positive, and in particular there exists at least one $B \in \Lambda_q \cap \ZZ^{k \times k}$ such that the columns of $B'$ form a primitive set. By Proposition~\ref{prop:twoproperties}, $B$ will also satisfy Properties (a) and (b) of Goal~\ref{goal}, meaning we have shown that $\nsg$ is a $k$-quotient.

%%%%%%%%%%%%%%%%%%%%%%%%%%%%%%%%%%%%%%%%%%%%%%%%%%%%%%%%%%%%%%%%%%%%%%%%%
\section*{Acknowledgements}%%%%%%%%%%%%%%%%%%%%%%%%%%%%%%%%%%%%%%%%%%%%%%
%raggedbottom%%%%%%%%%%%%%%%%%%%%%%%%%%%%%%%%%%%%%%%%%%%%%%%%%%%%%%%%%%%%

Tristram Bogart was supported by internal research grant INV-2020-105-2076 from the Faculty of Sciences of the Universidad de los Andes.

%%%%%%%%%%%%%%%%%%%%%%%%%%%%%%%%%%%%%%%%%%%%%%%%%%%%%%%%%%%%%%%%%%%%%%%%%
%%%%%%%%%%%%%%%%%%%%%%%%%%%%%%%%%%%%%%%%%%%%%%%%%%%%
%%%%%%%%%%%%%%%%%%%%%%%%%%%%%%%%%%%%%%%%%%%%%%%%%%%%%%%%%%%%%%%%%%%%%%%%%

%%%%%%%%%%%%%%%%%%%%%%%%%%%%%%%%%%%%%%%%%%%%%%%%%%%%%%%%%%%%%%%%%%%%%%%%%
\end{document}